\documentclass[12pt]{amsart}
\usepackage{fullpage}
\usepackage{tabu}
\usepackage{amssymb} 
\usepackage{amsmath} 
\usepackage{amscd}
\usepackage{amsbsy}
\usepackage{comment, enumerate}
\usepackage[matrix,arrow]{xy}
\usepackage{mathrsfs}
\usepackage{color}
\usepackage{mathtools,caption}
\usepackage{todonotes}
\usepackage{quiver}

\definecolor{darkgreen}{rgb}{0,0.5,0}

\usepackage[
        colorlinks, citecolor=darkgreen,
        backref,
]{hyperref}


\newcommand{\F}{\mathbb{F}}

\newcommand{\Q}{\mathbb{Q}}

\newcommand{\Z}{\mathbb{Z}}

\newcommand{\rhobar}{{\bar{\rho}}}

\newcommand{\p}{\mathfrak{p}}






\DeclareMathOperator{\Gal}{Gal}

\DeclareMathOperator{\tr}{tr}

\DeclareMathOperator{\rad}{rad}
\DeclareMathOperator{\Li}{Li}
\DeclareMathOperator{\rank}{rank}


\newcommand{\Mod}[1]{\ (\mathrm{mod}\ #1)}


\newcommand{\GL}{\operatorname{GL}}




\newcommand{\surjects}{\twoheadrightarrow}

\numberwithin{equation}{section}

\newtheorem{theorem}{Theorem}[section]
\newtheorem{lemma}[theorem]{Lemma}
\newtheorem{corollary}[theorem]{Corollary}
\newtheorem{proposition}[theorem]{Proposition}

\theoremstyle{definition}
\newtheorem{definition}[theorem]{Definition}

\theoremstyle{remark}
\newtheorem{remark}[theorem]{Remark}

\setlength{\parindent}{0mm}
\setlength{\parskip}{1ex plus 0.5ex}

\begin{document}

\title[Improved bounds Serre's open image theorem]{Improved bounds for Serre's open image theorem}

\date{\today}

\keywords{}
\subjclass[2020]{11G05, 11F80}

\author{Imin Chen}

\address{Department of Mathematics, Simon Fraser University\\
Burnaby, BC V5A 1S6, Canada.} 
\email{ichen@sfu.ca}

\author{Joshua Swidinsky}

\address{Department of Mathematics, Simon Fraser University\\
Burnaby, BC V5A 1S6, Canada.}
\email{joshua\_swidinsky@sfu.ca}

\begin{abstract}
Let $E$ be an elliptic curve over the rationals which does not have complex multiplication. Serre showed that the adelic representation attached to $E/\Q$ has open image, and in particular there is a minimal natural number $C_E$ such that the mod $\ell$ representation $\rhobar_{E,\ell}$ is surjective for any prime $\ell > C_E$. Assuming the Generalized Riemann Hypothesis, Mayle-Wang gave explicit bounds for $C_E$ which are logarithmic in the conductor of $E$ and have explicit constants. The method is based on using effective forms of the Chebotarev density theorem together with the Faltings-Serre method, in particular, using the `deviation group' of the $2$-adic representations attached to two elliptic curves. 

By considering quotients of the deviation group and a characterization of the images of the $2$-adic representation $\rho_{E,2}$ by Rouse and Zureick-Brown, we show in this paper how to further reduce the constants in Mayle-Wang's results. Another result of independent interest are improved effective isogeny theorems for elliptic curves over the rationals.
\end{abstract}

\thanks{}

\maketitle

{
\hypersetup{linkcolor=black}
\setcounter{tocdepth}{1}
\tableofcontents
}

\section{Introduction}

Let $E$ be an elliptic curve over $\Q$ without complex multiplication. Serre showed in \cite{serregalois} that the adelic representation attached to $E/\Q$ has open image, in particular, there is a minimal natural number $C_E$ such that the mod $\ell$ representation $\rhobar_{E,\ell}$ is surjective for any prime $\ell > C_E$. 

In determining effective bounds on $C_E$, one typically uses effective versions of the Chebotarev density theorem under the assumption of the Generalized Riemann Hypothesis (GRH) as was first done by Serre. The bounds on $C_E$ usually depend on the radical $\rad(N_E)$ of the conductor $N_E$ of $E$ over $\Q$. In Serre's original treatment \cite{serrechebotarev}, the following theorem was shown. By GRH, we mean the conjecture which applies to the Artin $L$-functions of Galois extensions $L/K$; unless otherwise stated, $K = \Q$.
\begin{theorem}\cite[Theorem 21]{serrechebotarev}\label{thm:Serre_elliptic_curve_prime}
Assume GRH. Let $E$ and $E'$ be two elliptic curves defined over $\Q$. Suppose that $E$ and $E'$ are not $\Q$-isogenous. Then there exists a prime $p$ of good reduction for $E$ and $E'$ such that $a_p(E) \neq a_p(E')$ and satisfying the inequality
\begin{equation}\label{eq:elliptic_curve_prime_bound} 
p \leq C_1 (\log \rad(N_E N_{E'}))^2 (\log \log \rad(2 N_E N_{E'}))^{12}, 
\end{equation}
where $C_1$ is an absolute constant.
\end{theorem}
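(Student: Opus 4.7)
The plan is to combine the mod-$\ell$ Galois representations attached to $E$ and $E'$ with an effective form of the Chebotarev density theorem under GRH (Lagarias--Odlyzko--Serre). For a prime $\ell \nmid N_E N_{E'}$, set $L_\ell = \Q(E[\ell], E'[\ell])$ and identify $G_\ell := \Gal(L_\ell/\Q)$ with the image of $\rhobar_{E,\ell} \oplus \rhobar_{E',\ell}$ inside $\GL_2(\F_\ell) \times \GL_2(\F_\ell)$. Define the conjugacy-stable subset
\[
T_\ell = \{(g,h) \in G_\ell : \tr g \neq \tr h\}.
\]
Since $\tr \rhobar_{E,\ell}(\Frob_p) \equiv a_p(E) \pmod \ell$ for every $p$ of good reduction, any unramified $p$ with $\Frob_p \in T_\ell$ automatically satisfies $a_p(E) \neq a_p(E')$. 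The task therefore reduces to producing a small $\ell$ with $T_\ell \neq \emptyset$ and then applying Chebotarev to locate such a $p$.

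The second step is to bound the smallest admissible $\ell$. Suppose $T_\ell = \emptyset$ for every $\ell \leq Y$. Then for every good prime $p$, the integer $a_p(E) - a_p(E')$ is divisible by every prime $\ell \leq Y$, hence by the primorial $Y\# \sim e^Y$. Combined with Hasse's bound $|a_p(E) - a_p(E')| \leq 4\sqrt p$, this forces $a_p(E) = a_p(E')$ for all $p \leq e^{2Y}/16$. An effective isogeny theorem under GRH then yields $E \sim_\Q E'$ once $Y$ grows like $\log\log \rad(N_E N_{E'})$, contradicting the hypothesis. Contrapositively, there exists $\ell \ll \log\log \rad(N_E N_{E'})$ with $T_\ell \neq \emptyset$.

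The third step is to apply effective Chebotarev to $L_\ell/\Q$ and the nonempty subset $T_\ell$: under GRH there exists an unramified prime $p$ with $\Frob_p \in T_\ell$ and $p \ll (\log|d_{L_\ell}|)^2$. Since $L_\ell$ is ramified only above primes dividing $\ell N_E N_{E'}$, the conductor--discriminant formula gives $\log|d_{L_\ell}| \ll [L_\ell:\Q] \log (\ell \cdot \rad(N_E N_{E'}))$, while $[L_\ell:\Q] \leq |\GL_2(\F_\ell)|^2 \ll \ell^8$. Inserting $\ell \ll \log\log \rad(N_E N_{E'})$ into these estimates yields a bound of the shape $(\log \rad(N_E N_{E'}))^2 (\log\log \rad(2 N_E N_{E'}))^{O(1)}$; the precise exponent $12$ emerges from balancing the degree (which enters quadratically through Chebotarev) against the bound on $\ell$, and from refining $[L_\ell:\Q]$ by factoring out the determinant character to replace $\ell^8$ with a smaller power.

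The main obstacle is the second step: the required bound on the smallest $\ell$ with $T_\ell \neq \emptyset$ itself rests on an effective isogeny theorem, so the overall argument is a bootstrap in which one Chebotarev application feeds into another. One must verify that the available effective isogeny input is strong enough that $Y = O(\log\log \rad(N_E N_{E'}))$ suffices, and then coordinate this with the Chebotarev bound to keep the $\log\log$ exponent down to $12$. The second obstacle, smaller but genuine, is the careful tracking of ramification at $\ell$ and at primes of additive reduction when invoking the conductor--discriminant formula, since these contributions must not inflate $\log|d_{L_\ell}|$ beyond the $[L_\ell:\Q]\log\rad(N_E N_{E'})$ shape.
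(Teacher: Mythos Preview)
The paper does not prove this theorem. Theorem~\ref{thm:Serre_elliptic_curve_prime} is quoted from \cite{serrechebotarev} purely as historical background; the paper's own contributions (Theorems~\ref{thm:mayle_wang_improvement}, \ref{thm:mayle_wang_twist}, \ref{thm:mayle_wang_surjective_result}) are proved instead via the $2$-adic deviation group and its quotients, following Mayle--Wang. So there is no ``paper's proof'' to compare against here.

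On your plan itself, the genuine gap is the one you yourself label the main obstacle, and it is not resolved. In Step~2 you write that ``an effective isogeny theorem under GRH then yields $E \sim_\Q E'$'' once traces agree up to $e^{2Y}/16$; but an effective isogeny theorem is exactly what you are trying to prove, so as written the argument is circular. Calling it a bootstrap does not fix this unless you name a strictly weaker input that is already available without the theorem in hand. One way to break the circle: first run the Faltings--Serre/deviation-group argument at a \emph{fixed} prime (say $\ell = 2$), where the relevant Galois extension has degree bounded by an absolute constant; Lagarias--Odlyzko then gives a prime $p_0$ with $a_{p_0}(E) \neq a_{p_0}(E')$ and $p_0 \ll (\log \rad N_E N_{E'})^2 (\log\log \rad N_E N_{E'})^4$. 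Feeding this $p_0$ into Hasse and your primorial argument gives $Y \ll \log\log \rad N_E N_{E'}$, which is what Step~2 needs. Without specifying such an input your Step~2 does not close, and the exponent bookkeeping in Step~3 (why $12$ rather than something else) cannot even begin.
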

Based on the method used, the constant $C_1$ here is unfortunately rather large. Recent work of Mayle-Wang \cite{Mayle-Wang} has given an explicit result on the smallest prime which achieves $a_p(E) \neq a_p(E')$. The constants are quite small, and like Serre, depend on only knowledge of the primes of bad reduction of the two elliptic curves $E$ and $E'$. 

\begin{theorem}\cite[Theorem 2]{Mayle-Wang}
\label{thm:mayle_wang_result}
Assume GRH. Let $E$ and $E'$ be two elliptic curves over $\Q$ without complex multiplication. Suppose $E$ and $E'$ are not $\Q$-isogenous. Then there exists a prime $p$ of good reduction for $E$ and $E'$ such that $a_p(E) \neq a_p(E')$ and satisfying the inequality
\begin{equation}
\label{eq:mayle_wang_final_result}
p \leq (482\log\rad(2 N_E N_{E'}) + 2880)^2,
\end{equation}
where $N_E$ and $N_{E'}$ denote the conductors of $E$ and $E'$, respectively.
\end{theorem}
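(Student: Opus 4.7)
The plan is to combine the Faltings--Serre method applied to the $2$-adic Galois representations $\rho_{E,2}$ and $\rho_{E',2}$ with an explicit form of the Chebotarev density theorem under GRH. First, form the product representation
\[
\rho = \rho_{E,2} \times \rho_{E',2} \colon \Gal(\Qbar/\Q) \To \GL_2(\Z_2) \times \GL_2(\Z_2),
\]
and reduce modulo $2^k$ for a small level $k$, letting $L$ denote the fixed field of the kernel of this reduction. The image $D \subset \GL_2(\Z/2^k) \times \GL_2(\Z/2^k)$ is the ``deviation group'' alluded to in the abstract. A quantitative Faltings--Serre criterion (in the style of Livn\'e) isolates a specific subset $T \subset D$ of test conjugacy classes such that, if no prime $p$ of good reduction has $\Frob_p$ mapping into $T$ with $a_p(E) \neq a_p(E')$, then the traces of $\rho_{E,2}$ and $\rho_{E',2}$ agree on a set of density one, whence $\rho_{E,2} \cong \rho_{E',2}$, and hence $E$ and $E'$ are $\Q$-isogenous by Faltings --- contradicting the hypothesis. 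The task therefore reduces to producing effectively a prime $p$ whose Frobenius lies in $T$.

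The next step is to bound the discriminant of $L$. Because $L$ is cut out by a $2$-power quotient of the $2$-adic representations, it is unramified outside $2 N_E N_{E'}$. The conductor--discriminant formula together with standard bounds on the local conductor exponents of $\rho_{E,\ell}$ (controlled via Kodaira type at odd primes of bad reduction, and by the possible $2$-adic image structures at the prime $2$) gives an estimate of the form
\[
\log d_L \leq \alpha \cdot [L:\Q] \cdot \log \rad(2 N_E N_{E'}) + \beta,
\]
with $[L:\Q]$ uniformly bounded since $D$ embeds in the finite group $\GL_2(\Z/2^k) \times \GL_2(\Z/2^k)$. The most delicate piece is the wild ramification at $2$, which produces a fixed additive contribution; the Rouse--Zureick-Brown classification of possible $2$-adic images is exactly what lets one make this contribution explicit.

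Finally, I would apply the Bach--Sorensen explicit form of effective Chebotarev under GRH: for any conjugacy class $C$ in $\Gal(L/\Q)$, some prime $p$ unramified in $L$ satisfies $\Frob_p \in C$ and
\[
p \leq \bigl(A \log d_L + B \cdot [L:\Q] + \gamma\bigr)^2
\]
for explicit absolute constants $A, B, \gamma$. Specializing $C$ to a class inside $T$ and combining with the discriminant bound yields an inequality of the shape $p \leq (a \log \rad(2 N_E N_{E'}) + b)^2$. The principal obstacle is numerical: the level $k$ must be chosen to balance the size of $[L:\Q]$ against the sharpness of the Faltings--Serre criterion at that level, and the constants must be tracked with care through the local conductor analysis at $2$ and through the Chebotarev estimate in order to arrive at the specific values $a = 482$, $b = 2880$ stated in the theorem.
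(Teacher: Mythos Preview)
Your high-level strategy---combine the Faltings--Serre method on the pair of $2$-adic representations with the Bach--Sorenson effective Chebotarev theorem under GRH---is correct, and is indeed the route taken by Mayle--Wang (as recounted in this paper via Proposition~\ref{prop:Mayle-Wang} and Proposition~\ref{prop:mayle_wang_generalization}). However, several of the specific ingredients you name are not the ones actually used, and a couple would not work as stated.

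\textbf{The deviation group is not a mod $2^k$ image.} You describe $D$ as the image of $\rho_{E,2}\times\rho_{E',2}$ modulo $2^k$ for some level $k$ to be chosen. That is not the construction. One lets $M\subseteq M_2(\Z_2)\oplus M_2(\Z_2)$ be the $\Z_2$-subalgebra generated by the image of $\rho_{E,2}\times\rho_{E',2}$, and the deviation group is $\delta(G)\subseteq (M/2M)^\times$. The point is that $M$ is a free $\Z_2$-module of rank at most $2\cdot 2^2=8$, so $|M/2M|\le 2^8$ and $|\delta(G)|\le 255$ \emph{automatically}, with no level to tune. There is consequently no ``balancing act'' between the size of $[L:\Q]$ and the sharpness of the criterion: the field $K$ with $\Gal(K/\Q)=\delta(G)$ has degree at most $255$ outright. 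The Faltings--Serre input (Proposition~\ref{prop:Mayle-Wang}) then directly furnishes a nonempty conjugation-stable set $C\subseteq\delta(G)$ such that any $g$ with $\delta(g)\in C$ satisfies $\tr\rho_{E,2}(g)\ne\tr\rho_{E',2}(g)$; one does not argue by contradiction via density-one agreement of traces.

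\textbf{The discriminant bound is elementary, and Rouse--Zureick-Brown plays no role here.} You propose bounding $\log d_L$ via the conductor--discriminant formula, Kodaira types at odd primes, and the Rouse--Zureick-Brown classification to handle wild ramification at $2$. None of this is needed or used. The only input is that $K$ (and hence $\tilde K=K(\sqrt{m})$ with $m=\rad(N_EN_{E'})$) is unramified outside $2N_EN_{E'}$, together with the completely general estimate of Lemma~\ref{lem:Mayle-Wang_discriminant_bound}:
\[
\log|d_{\tilde K}|\le (n_{\tilde K}-1)\log\rad(d_{\tilde K})+n_{\tilde K}\log n_{\tilde K}.
\]
Feeding $n_{\tilde K}\le 2|\delta(G)|$ and $\rad(d_{\tilde K})\mid\rad(2N_EN_{E'})$ into Bach--Sorenson (Corollary~\ref{cor:bach-sorenson}) with the universal constants $(a,b,c)=(4,2.5,5)$ yields the explicit bound with $482$ and $2880$. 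The Rouse--Zureick-Brown classification enters only in this paper's \emph{improvements} (Theorem~\ref{thm:mayle_wang_twist}), not in the original Mayle--Wang result you are asked to prove.
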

The method is based on using effective forms of the Chebotarev density theorem together with the Faltings-Serre method, in particular, by studying the `deviation group' $\delta(G)$ of the $2$-adic representations attached to two elliptic curves. 

In our work, we explain how to replace $\delta(G)$ with smaller quotients in Mayle-Wang's original arguments. Using these smaller quotients allows us to prove an improved effective isogeny theorem for elliptic curves over $\Q$ with a certain condition on the mod $2$ representations.
\begin{theorem}\label{thm:mayle_wang_improvement}
    Assume GRH. Let $E$ and $E'$ be two elliptic curves over $\Q$. Suppose $E$ and $E'$ are not $\Q$-isogenous. Assume the mod 2 representations $\bar{\rho}_{E,2}$ and $\bar{\rho}_{E',2}$ are not isomorphic, or if they are isomorphic that they are absolutely irreducible. Then there exists a prime $p$ of good reduction for $E$ and $E'$ such that $a_p(E) \neq a_p(E')$ and satisfying the inequality 
    \begin{equation}
    \label{eq:mayle_wang_improvement_result} 
        p \leq (124 \log\rad(2 N_E N_{E'}) +  561)^2.
    \end{equation}
\end{theorem}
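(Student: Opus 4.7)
The plan is to follow Mayle-Wang's deviation-group strategy while exploiting the mod $2$ hypothesis to reduce the relevant quotient, and then feed the smaller quotient into the effective Chebotarev density theorem under GRH. First, one forms the combined $2$-adic representation $\rho := \rho_{E,2}\times\rho_{E',2}\colon G_\Q\to\GL_2(\Z_2)\times\GL_2(\Z_2)$ with image $G$; the deviation group $\delta(G)$ is a finite quotient of $G$ engineered so that a Frobenius class landing in a specified coset forces $a_p(E)\ne a_p(E')$. Mayle-Wang's constants $(482,2880)$ are driven essentially by $\log|\delta(G)|$ together with the conductor estimate for the fixed field of $\delta(G)$.

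The new ingredient is to identify a quotient $\bar\delta(G)$ of $\delta(G)$ that still detects non-isogeny under the stated hypothesis. In the first case, $\bar\rho_{E,2}\not\cong\bar\rho_{E',2}$, the residual representations already disagree, so $\bar\delta(G)$ can be taken as a subquotient of $\GL_2(\F_2)\times\GL_2(\F_2)$, essentially a mod-$2$ deviation; in the second case, $\bar\rho_{E,2}\cong\bar\rho_{E',2}$ with both absolutely irreducible, Schur's lemma forces the mod $2$ intertwiner to be unique up to scalars, so the failure of isomorphism of the $2$-adic lifts is measured by a small cohomology class with values in $\mathrm{Ad}^{0}\bar\rho_{E,2}$ and the corresponding quotient is again much smaller than $\delta(G)$.

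The Rouse--Zureick-Brown classification of possible images of $\rho_{E,2}$ in $\GL_2(\Z_2)$ then pins down $\rho_{E,2}(G_\Q)$ as one of finitely many explicit subgroups, which allows one to express the conductor of the fixed field $L$ of $\ker(G\to\bar\delta(G))$ in terms of the primes dividing $2N_EN_{E'}$ and hence in terms of $\rad(2N_EN_{E'})$. Feeding $L$ into the Serre--Lagarias--Odlyzko effective Chebotarev bound under GRH, exactly as in Mayle-Wang, produces a prime $p\le (A\log\rad(2N_EN_{E'})+B)^{2}$ of good reduction with $a_p(E)\ne a_p(E')$, and reading off $A$ and $B$ from the reduced quotient yields the claimed constants $124$ and $561$.

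The main obstacle is verifying rigorously, in each case, that $\bar\delta(G)$ retains the trace-detection property: one must rule out that a Frobenius sitting in the distinguished coset of $\bar\delta(G)$ could still satisfy $a_p(E)=a_p(E')$. This requires identifying precisely which cosets correspond to unequal trace pairs and, in the absolutely irreducible case, some Schur-lemma/deformation-theoretic bookkeeping to show that the surviving cohomology class reflects a genuine trace difference rather than just a lift obstruction. Once that is in place, the improvement from $(482,2880)$ to $(124,561)$ becomes a careful but essentially mechanical numerical optimization carried out with the same analytic input as Mayle-Wang.
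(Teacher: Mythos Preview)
Your outline has the right two-case split and correctly identifies that in Case~1 (non-isomorphic mod~$2$) one works directly with a subgroup of $\GL_2(\F_2)\times\GL_2(\F_2)$, and that in Case~2 (isomorphic, absolutely irreducible) the relevant quotient lives in $M_2^0(\F_2)\rtimes\GL_2(\F_2)$. But there are two substantive problems.

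First, the Rouse--Zureick-Brown classification plays no role in this theorem. The discriminant bound for the fixed field of the quotient comes from the elementary ramification estimate $\log|d_K|\le ([K{:}\Q]-1)\log\rad(d_K)+[K{:}\Q]\log[K{:}\Q]$ together with $\rad(d_K)\mid\rad(2N_EN_{E'})$; no knowledge of the actual $2$-adic image is needed. The paper reserves Rouse--Zureick-Brown for the separate quadratic-twist theorem, where the hypothesis of absolute irreducibility is dropped.

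Second, and more seriously, your Case~2 argument as stated only reaches $|\varphi(G)|\le 48$, which feeds through Bach--Sorenson to give constants roughly $(166,794)$, not $(124,561)$. The paper closes this gap with an additional step you have not anticipated: Ch\^enevert's result that any $g\in G$ with equal characteristic polynomials for $\rho_{E,2}(g)$ and $\rho_{E',2}(g)$ has image of order $\le 3$ in $\delta(G)$. Hence any quotient $\bar G$ of $\delta(G)$ containing an element of order $>3$ already detects a trace difference. One then enumerates in \texttt{Magma} all isomorphism types of groups of the admissible orders for $\delta(G)$ (divisors of $(6\cdot 16^{n-1})^2$ bounded by $255$), checks which ones fail to have a strictly smaller such quotient, and verifies that none of these ``problematic'' groups of order $\ge 32$ can map onto a subgroup of order $24$ or $48$ in $M_2^0(\F_2)\rtimes\GL_2(\F_2)$. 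This forces the effective quotient to have order $\le 24$, matching the Case~1 bound. Your sketch also glosses over why $\varphi(G)$ detects traces at all: this requires $\alpha=\beta$, which is exactly where absolute irreducibility enters via Carayol's theorem, not merely Schur's lemma.
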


\begin{remark}
    Mayle-Wang, in Theorem \ref{thm:mayle_wang_result}, include a hypothesis that the elliptic curves $E$ and $E'$ be without complex multiplication; in Proposition \ref{prop:mayle_wang_generalization} and Theorem \ref{thm:mayle_wang_improvement}, we have dropped this assumption. All we require here is the existence of a prime of good reduction such that $a_p(E) \neq a_p(E')$, which is satisfied once we assume the two elliptic curves are not $\Q$-isogenous, a consequence of Faltings' Theorem \cite{faltings} (see translation in \cite{silverman_cornell}).
\end{remark}

We also prove another improved effective isogeny theorem which applies for elliptic curves over $\Q$ which are quadratic twists of each other and do not have complex multiplication.
\begin{theorem}\label{thm:mayle_wang_twist}
    Assume GRH. Let $E$ and $E'$ be two elliptic curves over $\Q$ which are quadratic twists of each other and do not have complex multiplication. Suppose $E$ and $E'$ are not $\Q$-isogenous. Then there exists a prime $p$ of good reduction for $E$ and $E'$ such that $a_p(E) \neq a_p(E')$ and satisfying the inequality 
    \begin{equation}\label{eq:mayle_wang_improvement_result_twist} 
        p \leq (223 \log\rad(2N_E N_{E'}) + 1127)^2.
    \end{equation}
\end{theorem}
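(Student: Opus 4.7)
The plan is to follow the Faltings--Serre plus effective Chebotarev template used for Theorem \ref{thm:mayle_wang_improvement}, adapted to the quadratic twist setting. Write $E' = E^\chi$ for a quadratic character $\chi \colon G_\Q \to \{\pm 1\}$ whose conductor divides $\rad(2 N_E N_{E'})$, so that $\rho_{E',2} = \rho_{E,2} \otimes \chi$ and $a_p(E') = \chi(p) a_p(E)$ for $p$ of good reduction. Then $a_p(E) \neq a_p(E')$ is equivalent to the conjunction $\chi(p) = -1$ and $a_p(E) \neq 0$, so the task reduces to producing such a prime via effective Chebotarev on a suitable Galois extension.

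Because $\chi$ is trivial modulo $2$, the reductions $\bar\rho_{E,2}$ and $\bar\rho_{E',2}$ are automatically isomorphic; when they are absolutely irreducible, Theorem \ref{thm:mayle_wang_improvement} already applies with the stronger constants, so the genuinely new content of Theorem \ref{thm:mayle_wang_twist} lies in the reducible mod $2$ case. I would then invoke the Rouse--Zureick-Brown classification of the possible $2$-adic images $\rho_{E,2}(G_\Q) \subset \GL_2(\Z_2)$ for non-CM $E/\Q$ to pin down, for each image type, the smallest power $2^k$ at which the nonvanishing locus $\{g : \tr \rho_{E,2}(g) \neq 0\}$ is detected by a union of conjugacy classes modulo $2^k$. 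Form the compositum $L = \Q(\chi) \cdot \Q(E[2^k])$; its Galois group embeds into $\{\pm 1\} \times \GL_2(\Z/2^k\Z)$ with uniformly bounded order, and admits a positive-density conjugacy class $C$ simultaneously realizing $\chi = -1$ and $\tr \rho_{E,2} \not\equiv 0 \pmod{2^k}$, analogous to the deviation-quotient construction in the proof of Theorem \ref{thm:mayle_wang_improvement}.

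Applying the GRH-effective Chebotarev estimate used by Mayle--Wang to the pair $(L/\Q, C)$ then produces a prime $p$ with $\Frob_p \in C$, hence with $a_p(E) \neq a_p(E')$, and whose size is bounded quadratically in $\log\rad(2 N_E N_{E'})$. Since the ramification of $L$ lies above primes dividing $2 N_E N_{E'}$, the discriminant of $L$ is controlled by $[L : \Q] \log \rad(2 N_E N_{E'})$ plus lower-order error; numerical optimization of the resulting quadratic bound, parallel to the argument yielding Theorem \ref{thm:mayle_wang_improvement}, produces the constants $223$ and $1127$. The principal obstacle is uniform bookkeeping across the Rouse--Zureick-Brown image types in the reducible regime: for each one must identify the correct $2^k$, compute the degree $[L:\Q]$ and the density of $C$, and verify that the worst case produces exactly the stated bound; the loss from $124$ to $223$ should come precisely from the need to pass to higher $2$-power level when the mod $2$ representation is reducible.
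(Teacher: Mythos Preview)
Your route---detect $\chi(p)=-1$ and $a_p(E)\neq 0$ directly via $\tr\rho_{E,2}\not\equiv 0\pmod{2^k}$ at a bounded level $k$---is genuinely different from the paper's, and as sketched it has a gap.

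The paper never leaves the deviation-group framework of Propositions~\ref{prop:Mayle-Wang} and~\ref{prop:mayle_wang_varphi}. It first splits on whether $\Q(\sqrt d)\subseteq\Q(E[2^\infty])$. If not, one extends the identity on $\Q(E[2^\infty])$ to an automorphism $\sigma$ with $\chi(\sigma)=-1$, obtaining $\rho_{E,2}(\sigma)=I$ and $\rho_{E',2}(\sigma)=-I$; this forces the invariants $\alpha=\beta=1$ of Section~\ref{sec:deviation_group}, so the argument of Proposition~\ref{prop:mayle_wang_varphi} runs without invoking Carayol's theorem and yields the bound of Theorem~\ref{thm:mayle_wang_improvement}. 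If $\Q(\sqrt d)\subseteq\Q(E[2^\infty])$, the paper uses $\rho_{E',2}=\rho_{E,2}\otimes\chi$ and the graph isomorphism $\rho_{E,2}(G)\cong\rho_2(G)$ to show that the deviation map factors through $\rho_{E,2}(G)/(1+2^{t+1}M_2(\Z_2))$, where $t\le 5$ is the Rouse--Zureick-Brown level. A {\tt Magma} loop over the $1208$ RZB images, restricted to those with non-absolutely-irreducible mod-$2$ reduction, then checks that this finite group has no quotient isomorphic to the problematic groups of order $96$, $128$, $192$ in Table~\ref{problem-table}; combined with Corollary~\ref{chenevert-quotient}, this lets one replace $\delta(G)$ by a quotient of order at most $64$, and feeding $2\cdot 64=128$ into Proposition~\ref{cor:bach_sorenson_condense} produces exactly $(223,1127)$.

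The main gap in your sketch is the dependence on $\chi$. You propose to bound $k$ ``for each image type'', but when $\Q(\chi)\subseteq\Q(E[2^\infty])$ the character corresponds to one of several index-$2$ subgroups of $\rho_{E,2}(G)$, and the existence of a conjugacy class in the nontrivial coset with $\tr\not\equiv 0\pmod{2^k}$ must be verified for \emph{each} such subgroup, not merely each image type. Nor do you argue that the resulting field degree stays small enough: in the reducible mod-$2$ case every trace is even, so $k\ge 2$, and already $[\Q(E[4]):\Q]$ can be $96$; after adjoining $\Q(\chi)$ and then $\Q(\sqrt m)$ as in Corollary~\ref{cor:bach-sorenson}, the degree entering Chebotarev can well exceed $128$, so the leading constant would be larger than $223$. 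Your approach might be salvageable with a more refined case-by-case computation, but it neither matches the paper's argument nor visibly recovers the stated numerics.
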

This version requires the results of Rouse and Zureick-Brown \cite{Rouse-Zureick-Brown} which characterizes the images of the $2$-adic representations attached to an elliptic curve over $\Q$. 

Using Theorem~\ref{thm:mayle_wang_result}, Mayle-Wang \cite[Theorem 1]{Mayle-Wang} prove the following bound for Serre's open image theorem.
\begin{theorem}\label{thm:mayle_wang_surjective}
    Assume GRH. Let $E$ an elliptic curve over $\Q$ without complex multiplication.
    Then \[ C_E \le 964 \log \rad(2 N_E) + 5760.\]
\end{theorem}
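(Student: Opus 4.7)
The plan is to combine Serre's classification of proper subgroups of $\GL_2(\F_\ell)$ with Theorem~\ref{thm:mayle_wang_result} applied to a suitable quadratic twist of $E$, aiming to show that whenever $\rhobar_{E,\ell}$ is not surjective one already has $\ell \le 964 \log \rad(2 N_E) + 5760$.

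First I would invoke Serre's classification: for $\ell \ge 5$, the image of $\rhobar_{E,\ell}$ either lies in a Borel subgroup, in the normalizer $N(C)$ of a (split or non-split) Cartan subgroup $C$, or has projective image one of the exceptional groups $A_4, S_4, A_5$. The Borel case yields a $\Q$-rational $\ell$-isogeny on $E$, so by Mazur's theorem $\ell \le 163$. The exceptional case bounds $\ell$ by a small absolute constant (e.g.\ $\ell \le 13$). The Cartan-but-not-normalizer subcase forces an abelian image, which for non-CM $E/\Q$ is impossible once $\ell$ is sufficiently large. All of these fall well below the claimed bound, so the principal case is when $\rhobar_{E,\ell}(\Gal(\Qbar/\Q))$ lies in $N(C)$ but not in $C$.

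In that principal case I would introduce the quadratic character $\chi : \Gal(\Qbar/\Q) \to \{\pm 1\}$ defined by composing $\rhobar_{E,\ell}$ with the projection $N(C) \twoheadrightarrow N(C)/C \cong \Z/2$, and set $E' := E \otimes \chi$. Since the image of $\rhobar_{E,\ell}$ in $N(C)$ is preserved under multiplication by $\chi$, we get $\rhobar_{E',\ell} \cong \rhobar_{E,\ell} \otimes \chi \cong \rhobar_{E,\ell}$, hence $a_p(E) \equiv a_p(E') \Mod{\ell}$ at every prime $p$ of good reduction for both curves. The non-CM hypothesis on $E$ (inherited by $E'$ as a quadratic twist) guarantees $E$ and $E'$ are not $\Q$-isogenous. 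A local analysis is then needed to show that the prime divisors of $N_{E'}$ are contained in those of $2 N_E$, so that $\rad(2 N_E N_{E'}) = \rad(2 N_E)$: outside primes dividing $N_E$ and $\ell$, $\rhobar_{E,\ell}$ is unramified, hence so is $\chi$; and at $\ell$ with $\ell \nmid N_E$, the image of $I_\ell$ lies in the Cartan $C$ (as $\omega \oplus 1$ in the ordinary case, or via the level-$2$ fundamental characters in the supersingular case), whence $\chi$ is also unramified there.

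Applying Theorem~\ref{thm:mayle_wang_result} to $E$ and $E'$ now produces a prime $p \le (482 \log \rad(2 N_E) + 2880)^2$ of good reduction for both with $a_p(E) \ne a_p(E')$. The identity $a_p(E') = \chi(p) a_p(E)$ gives $a_p(E) - a_p(E') = (1 - \chi(p)) a_p(E)$, so any witnessing prime must satisfy $\chi(p) = -1$, making $a_p(E) - a_p(E') = 2 a_p(E)$. Combined with $\ell \mid a_p(E) - a_p(E')$ and $\ell$ odd this yields $\ell \mid a_p(E)$, and the Hasse bound then gives
\[ \ell \le |a_p(E)| \le 2\sqrt{p} \le 2(482 \log \rad(2 N_E) + 2880) = 964 \log \rad(2 N_E) + 5760, \]
precisely explaining the constants $964 = 2 \cdot 482$ and $5760 = 2 \cdot 2880$ (the factor $2$ rather than the naive $4$ coming from $a_p(E) - a_p(E') = 2 a_p(E)$ instead of a generic difference of two Frobenius traces). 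The main obstacle is the conductor analysis that justifies $\rad(2 N_E N_{E'}) = \rad(2 N_E)$: ruling out ramification of $\chi$ at $\ell$ requires the case split on ordinary versus supersingular reduction via the known shape of $\rhobar_{E,\ell}|_{I_\ell}$, and ramification contributions at bad primes must be absorbed into $\rad(N_E)$.
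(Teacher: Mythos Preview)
Your proposal is correct and reconstructs precisely the argument of Mayle--Wang \cite[Theorem~1]{Mayle-Wang}, which is what the paper cites for this statement rather than proving it independently; the paper's own proof of the improved Theorem~\ref{thm:mayle_wang_surjective_result} likewise just says ``combine Theorems~\ref{thm:mayle_wang_improvement} and~\ref{thm:mayle_wang_twist} with the arguments in \cite{Mayle-Wang}'', so there is no alternative route to compare against. Your derivation of the constants $964 = 2\cdot 482$ and $5760 = 2\cdot 2880$ via $\ell \mid a_p(E)$ and the Hasse bound is exactly the mechanism, and your identification of the conductor analysis at $\ell$ (showing $\chi$ is unramified there via the shape of $\rhobar_{E,\ell}|_{I_\ell}$) as the one nontrivial ingredient is accurate.
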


A consequence of our improved effective isogeny Theorems \ref{thm:mayle_wang_improvement} and \ref{thm:mayle_wang_twist} is an improvement in the constants for Serre's open image theorem.
\begin{theorem}\label{thm:mayle_wang_surjective_result}
    Assume GRH. Let $E$ an elliptic curve over $\Q$ without complex multiplication.
    Then \[C_E \le 446 \log \rad(2 N_E) + 2254.\]
\end{theorem}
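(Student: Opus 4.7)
The plan is to follow the standard dichotomy underlying Serre's approach to his open image theorem, replacing the use of Theorem~\ref{thm:mayle_wang_result} with the sharper effective isogeny theorems of this paper. Suppose for contradiction that $\rhobar_{E,\ell}$ is not surjective for some prime $\ell > 446 \log \rad(2 N_E) + 2254$. By Dickson's classification of subgroups of $\GL_2(\F_\ell)$, the image lies in either a Borel subgroup, the normalizer of a (split or non-split) Cartan subgroup, or an exceptional subgroup whose projective image is $A_4$, $S_4$, or $A_5$. The exceptional case forces $\ell \leq 13$ by standard bounds on projective images, and the Borel case means that $E$ admits a rational $\ell$-isogeny, whence $\ell \leq 37$ by Mazur's isogeny theorem since $E$ is non-CM. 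Both possibilities are eliminated by our assumption on $\ell$.

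The remaining case is when the image of $\rhobar_{E,\ell}$ lies in the normalizer $N$ of a Cartan subgroup $C$. Let $\chi\colon G_\Q \to N/C \cong \{\pm 1\}$ be the composition of $\rhobar_{E,\ell}$ with the quotient map, and let $E^\chi$ denote the quadratic twist of $E$ by $\chi$. For any prime $p$ of good reduction with $p \nmid \ell$, if $\chi(\Frob_p) = 1$ then $\rhobar_{E,\ell}(\Frob_p) \in C$, whereas if $\chi(\Frob_p) = -1$ then $\rhobar_{E,\ell}(\Frob_p) \in N \setminus C$, whose elements have trace zero. In both cases $a_p(E) \equiv a_p(E^\chi) \pmod{\ell}$. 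Because $E$ is non-CM, $E$ is not $\Q$-isogenous to $E^\chi$, so Theorem~\ref{thm:mayle_wang_twist} produces a prime $p$ of good reduction for both curves with $a_p(E) \neq a_p(E^\chi)$ and $p \leq (223 \log \rad(2 N_E N_{E^\chi}) + 1127)^2$. At this prime, the nonzero difference $a_p(E) - a_p(E^\chi) = 2 a_p(E)$ is divisible by $\ell$; since $\ell$ is odd, $\ell \mid a_p(E)$, and the Hasse bound yields $\ell \leq |a_p(E)| \leq 2\sqrt{p}$. Combining, $\ell \leq 446 \log \rad(2 N_E N_{E^\chi}) + 2254$.

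The main obstacle is to control $\rad(2 N_E N_{E^\chi})$ by $\rad(2 N_E)$, i.e., to show that the quadratic character $\chi$ is ramified only at primes dividing $2 N_E$. A priori $\chi$ could ramify at $\ell$; but a case-by-case analysis of the normalizer-of-split and normalizer-of-non-split Cartan situations shows that the projective quadratic character has controlled conductor. For instance, in the non-split case, $\chi$ is (up to a character of small conductor) the Legendre symbol $\left(\tfrac{\ell^*}{\cdot}\right)$ with $\ell^* = (-1)^{(\ell-1)/2}\ell$, and an analogous description handles the split case. Any residual $\log\ell$ contribution coming from ramification at $\ell$ is absorbed by the already-dominant linear term in the resulting inequality, yielding the claimed bound $\ell \leq 446 \log \rad(2 N_E) + 2254$ and completing the contradiction. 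The improvement over Theorem~\ref{thm:mayle_wang_surjective} thus comes purely from replacing Theorem~\ref{thm:mayle_wang_result} by the quadratic-twist effective isogeny Theorem~\ref{thm:mayle_wang_twist}.
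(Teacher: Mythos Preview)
Your overall architecture is correct and matches the paper's approach (which simply says ``combine Theorems~\ref{thm:mayle_wang_improvement} and~\ref{thm:mayle_wang_twist} with the arguments in \cite{Mayle-Wang}''): reduce via Dickson, eliminate Borel by Mazur and exceptional by small $\ell$, and in the Cartan--normalizer case pass to the quadratic twist $E^\chi$ and apply Theorem~\ref{thm:mayle_wang_twist}, then use $\ell \mid a_p(E)$ and Hasse to get $\ell \le 2\sqrt{p}$. The doubling $223 \mapsto 446$, $1127 \mapsto 2254$ is exactly right.

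The genuine gap is your treatment of $\rad(2 N_E N_{E^\chi})$. Your description of $\chi$ in the non-split case as essentially the Legendre symbol $\bigl(\tfrac{\ell^*}{\cdot}\bigr)$ is incorrect: that character \emph{is} ramified at $\ell$, so it would force a $\log \ell$ term, and your claim that such a term can be ``absorbed'' into the linear bound does not survive scrutiny. From $\ell \le 446\log\rad(2N_E) + 446\log\ell + 2254$ you only get $\ell - 446\log\ell \le 446\log\rad(2N_E) + 2254$, which does \emph{not} yield the stated constants.

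The point you are missing (and which Mayle--Wang's argument uses) is that $\chi$ is in fact \emph{unramified at $\ell$}. For $\ell \ge 5$ with $E$ of good reduction at $\ell$, Serre's description of $\rhobar_{E,\ell}|_{I_\ell}$ shows the image of inertia is cyclic of order $\ell-1$ (ordinary) or $\ell^2-1$ (supersingular) and sits inside a Cartan subgroup; once the global image lies in $N(C)$, one checks the inertia image must lie in $C$ itself, so $\chi|_{I_\ell}$ is trivial. (Complex conjugation, with eigenvalues $\pm 1$, lies in $N(C)\setminus C$, so $\chi$ is still nontrivial globally.) Hence the conductor of $\chi$ is supported on $\rad(2N_E)$, giving $\rad(2 N_E N_{E^\chi}) = \rad(2 N_E)$ exactly, with no $\log\ell$ term. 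With this correction your argument closes and recovers the paper's constants.
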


The {\tt Magma} computational algebra system \cite{magma} was used for verifying assertions in this paper. The electronic resources are available from 
\begin{center}
\url{https://github.com/ichensfuca/ChenSwidinsky}.
\end{center}

\section{Explicit forms of the Chebotarev density theorem}

Let $L/K$ be a finite Galois extension with Galois group $G$. Define the counting function $\pi_C(x, L/K)$, for a conjugacy class $C$ of the Galois group $G$ of $L/K$, to be the function
\[ \pi_C(x, L/K) = \left|\left\{ \p \mid \p \text{ unramified in $K$}, \left( \frac{L/K}{\p} \right) = C, \, \text{N}_{K/\Q}(\p) \leq x \right\}\right|. \]
\begin{theorem}[Chebotarev Density Theorem]\label{thm:chebotarev_analytic_version}
    Let $\pi_C(x,L/K)$ be as above. Then,
    \[ \pi_C(x,L/K) \sim \frac{|C|}{|G|} \Li(x). \]
\end{theorem}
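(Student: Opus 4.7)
The plan is to follow the classical route that combines a purely group-theoretic reduction with the analytic non-vanishing of Artin $L$-functions on the line $\re(s)=1$.

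First, I would introduce, for each irreducible character $\chi$ of $G$, the Artin $L$-function $L(s,\chi,L/K)$, together with Artin's factorization
\[
\zeta_L(s) \;=\; \prod_{\chi} L(s,\chi,L/K)^{\chi(1)}.
\]
From the Euler product expansion of $\log L(s,\chi,L/K)$ one obtains, for every class function $f$ on $G$, an identity of the shape
\[
\sum_{\p} f\!\left(\left(\tfrac{L/K}{\p}\right)\right) \mathrm{N}_{K/\Q}(\p)^{-s} \;=\; \sum_{\chi} \langle f,\chi\rangle \log L(s,\chi,L/K) \;+\; h(s),
\]
where $h(s)$ is holomorphic in a neighbourhood of $s=1$. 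Specializing $f$ to the indicator function of the conjugacy class $C$ turns the right hand side into the object whose behaviour as $s\to 1^+$ controls $\pi_C(x,L/K)$.

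Second, I would reduce to the cyclic case via Deuring's trick: pick any $\sigma \in C$, set $H = \langle\sigma\rangle \subseteq G$, and let $M = L^H$. The primes $\p$ of $K$ with $\left(\tfrac{L/K}{\p}\right)=C$ correspond, up to an explicit combinatorial factor involving $|C|$, $|G|$, and $|H|$, to primes $\q$ of $M$ whose Frobenius in $\Gal(L/M)\cong H$ equals $\sigma$. This expresses the $C$-counting function for $L/K$ in terms of a prime-counting function for the \emph{cyclic} extension $L/M$, at the price of replacing $K$ by $M$.

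Third, I would handle the cyclic case analytically. When $\Gal(L/M)$ is cyclic, the relevant Artin $L$-functions are attached to one-dimensional characters, hence are Hecke $L$-functions of ray class characters of $M$. The classical work of Hecke and Landau gives the meromorphic continuation and non-vanishing on $\re(s)=1$, and a Wiener--Ikehara Tauberian theorem then converts this into a prime-counting asymptotic with the correct density. Pulling this through the reduction in the previous step produces
\[
\pi_C(x,L/K) \;\sim\; \frac{|C|}{|G|}\,\Li(x).
\]
The main obstacle is the analytic step from a Dirichlet-type density (which follows formally from the simple pole of $\zeta_M$ at $s=1$ and the Euler product) to the natural density expressed in terms of $\Li(x)$. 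This requires both a Tauberian input and the non-vanishing of the relevant abelian $L$-functions on the line $\re(s)=1$; everything else is formal bookkeeping with characters and the Deuring reduction.
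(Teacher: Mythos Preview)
The paper does not actually prove this statement: Theorem~\ref{thm:chebotarev_analytic_version} is stated without proof as the classical Chebotarev density theorem, and the paper immediately moves on to quote the various effective versions (Lagarias--Odlyzko, Oesterl\'e, Bach--Sorenson) that it needs. There is therefore no ``paper's own proof'' to compare your proposal against.

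That said, your sketch is a faithful outline of the standard classical argument: the orthogonality/character expansion reducing to a sum over Artin $L$-functions, the Deuring reduction to the cyclic case, the identification of abelian Artin $L$-functions with Hecke $L$-functions together with their non-vanishing on $\re(s)=1$, and a Tauberian theorem to pass from analytic behaviour near $s=1$ to the asymptotic with $\Li(x)$. One small caveat worth tightening if you flesh this out: in the Deuring reduction step the correspondence between primes $\p$ of $K$ with Frobenius class $C$ and primes $\q$ of $M=L^{\langle\sigma\rangle}$ with Frobenius $\sigma$ is not literally a bijection but a weighted counting identity (each $\p$ lies under several $\q$, and one must also control the degree-one condition), so the ``explicit combinatorial factor'' deserves a precise statement. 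Otherwise the approach is correct and standard.
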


Effective versions of Chebotarev's Density Theorem exist as well and we shall be applying results in which the constants are explicitly computable in terms of the discriminants of $L$ and $K$, as well as the degree of each extension over $\Q$. The first of these was given by Lagarias and Odlyzko \cite{Lagarias_Odlyzko}. Their result relies on the validity of GRH.

We now state the first explicit form of Theorem \ref{thm:chebotarev_analytic_version}.
\begin{theorem}\cite[Theorem 1.1]{Lagarias_Odlyzko}
    There exists an effectively computable positive absolute constant $c_1$ such that if GRH holds for the Dedekind zeta function of $L$, then for every $x \geq 2$, 
    \[ |\pi_C(x,L/K) - \frac{|C|}{|G|}\Li(x)| \leq c_1 \left( \frac{|C|}{|G|} x^{1/2} \log(|d_L|x^{n_L}) + \log |d_L| \right). \]
\end{theorem}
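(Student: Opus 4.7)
The plan is to follow the classical analytic approach of Lagarias--Odlyzko, combining character-theoretic reduction with the explicit formula for Artin $L$-functions under GRH. The three ingredients are: (i) decompose the class function $1_C$ on $G$ into irreducible characters; (ii) relate the associated weighted prime sums to zeros of Artin $L$-functions via a Perron-type explicit formula; (iii) use GRH together with conductor bounds to estimate the zero contribution uniformly.

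First, by orthogonality write $1_C = (|C|/|G|)\sum_\chi \overline{\chi(g_C)}\chi$, the sum ranging over irreducible characters $\chi$ of $G$ with $g_C \in C$ fixed. Introduce the Chebyshev-type sum $\psi_C(x,L/K) = \sum_{\text{N}\p^m \le x} 1_C(\Frob_\p^m)\log\text{N}\p$ (with the usual conventions at ramified primes and higher prime powers), so $\psi_C$ becomes a linear combination of $\psi(x,\chi) := \sum_{\text{N}\p^m \le x} \chi(\Frob_\p^m)\log\text{N}\p$. Each $\psi(x,\chi)$ is essentially a Perron integral of $-L'(s,\chi,L/K)/L(s,\chi,L/K)$ along a vertical line.

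Second, apply the truncated explicit formula to each $\psi(x,\chi)$: one obtains $\psi(x,\chi) = \delta_\chi\, x - \sum_{|\Im \rho_\chi|\le T} x^{\rho_\chi}/\rho_\chi + R(x,T,\chi)$, where $\delta_\chi = 1$ if $\chi$ is trivial and $0$ otherwise, the sum is over non-trivial zeros of $L(s,\chi,L/K)$, and $R$ collects archimedean gamma-factor and truncation errors. Under GRH every such $\rho_\chi$ has real part $1/2$, so $|x^{\rho_\chi}/\rho_\chi| \le x^{1/2}/|\rho_\chi|$. A Riemann--von Mangoldt type estimate shows the number of zeros of $L(s,\chi,L/K)$ with $|\Im \rho_\chi|\le T$ is $O(T\log\mathrm{cond}(\chi) + T\,n_L\log T + \chi(1)\log T)$. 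Optimizing the truncation $T$ and recombining the characters via the conductor-discriminant formula $\sum_\chi \chi(1)\log\mathrm{cond}(\chi) = \log|d_L|$ converts the individual $\mathrm{cond}(\chi)$'s back into $|d_L|$, producing the $x^{1/2}\log(|d_L|x^{n_L})$ contribution.

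Third, reassemble: summing $(|C|/|G|)\overline{\chi(g_C)}\psi(x,\chi)$ over $\chi$ yields $\psi_C(x,L/K) = (|C|/|G|)\,x + O\!\bigl(c_1 (|C|/|G|)x^{1/2}\log(|d_L|x^{n_L}) + c_1\log|d_L|\bigr)$, where the second error term absorbs archimedean contributions and the $\psi$-versus-$\pi$ discrepancy at prime powers and ramified primes. A standard partial summation converts $\psi_C$ to $\pi_C$, replacing the main term $x$ by $\Li(x)$ while leaving the error shape unchanged. The main obstacle is the uniform control of the zero sum: one needs the GRH-based zero-density bound to hold with constants depending only on $n_L$ and $\log|d_L|$, and one must track how the conductor-discriminant formula funnels the per-character conductors into a single $\log|d_L|$ so that a single absolute constant $c_1$ suffices.
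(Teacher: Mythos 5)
The paper does not reprove this statement; it is quoted verbatim from Lagarias--Odlyzko, so the relevant comparison is with their proof. Your outline has the right analytic shape (explicit formula, GRH control of zeros, Riemann--von Mangoldt zero counting, conductor--discriminant formula, partial summation from $\psi_C$ to $\pi_C$), but the route you choose --- decomposing $1_C$ over the irreducible characters of the full group $G$ and applying the explicit formula to the Artin $L$-functions $L(s,\chi,L/K)$ --- has a genuine gap. For non-abelian $\chi$ these $L$-functions are not known to be entire (Artin's conjecture is open), so the truncated explicit formula and the zero-counting estimate you invoke for each $L(s,\chi,L/K)$ are not available in the clean form you state; at best one has meromorphy via Brauer induction, and one must then track poles as well as zeros. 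Moreover, the hypothesis of the theorem is GRH for the single Dedekind zeta function $\zeta_L$, not GRH for each Artin $L$-function separately; your assertion that ``under GRH every such $\rho_\chi$ has real part $1/2$'' requires a further argument relating the zeros (and possible off-line poles) of each factor of $\zeta_L = \prod_\chi L(s,\chi,L/K)^{\chi(1)}$ to the zeros of $\zeta_L$ itself.

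The actual proof in Lagarias--Odlyzko sidesteps all of this by the Deuring reduction: fix $g \in C$, let $H = \langle g \rangle$ and $E = L^{H}$, and relate $\pi_C(x,L/K)$ to the corresponding count for the extension $L/E$ with the singleton class $\{g\}$ (up to error terms coming from ramified primes and higher prime powers, which contribute the standalone $\log|d_L|$ term). Since $L/E$ is cyclic, the only $L$-functions needed are Hecke $L$-functions of characters of $H$; these are entire for nontrivial characters, satisfy known functional equations, and their product over all characters of $H$ is exactly $\zeta_L$, so GRH for $\zeta_L$ alone controls all of their zeros, and the conductor--discriminant formula applied to $L/E$ produces the $\log|d_L|$ factors in the bound. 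If you wish to salvage your character-theoretic route over $G$, you would need either to assume Artin's conjecture or to carry out the Brauer-induction bookkeeping for zeros and poles; as written, the middle step of your argument does not go through under the stated hypothesis.
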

An important corollary, one that we shall make use of, is finding an $x_0$ such that $\pi_C(x_0,L/K) > 0$.
\begin{corollary}\cite[Corollary 1.2]{Lagarias_Odlyzko}
    There exists an effectively computable positive absolute constant $c_2$ such that if GRH holds for the Artin $L$-functions of $L/\Q$, $L \neq \Q$, then for every conjugacy class $C$ of $G$ there exists an unramified prime ideal $\p$ of $K$ such that \[ \left( \frac{L/K}{\p} \right) = C \] and 
    \[ \text{N}_{K/\Q} (\p) \leq c_2(\log |d_L|)^2(\log\log |d_L|)^4. \]
    If $L = \Q$, then $\p = (2)$ is a solution.
\end{corollary}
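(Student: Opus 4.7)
The plan is to deduce the corollary from the quantitative Chebotarev estimate of the preceding theorem by locating an explicit threshold $x_0 = c_2(\log|d_L|)^2(\log\log|d_L|)^4$ for which $\pi_C(x_0, L/K) \geq 1$, which by definition of $\pi_C$ yields an unramified prime $\p$ of $K$ with Frobenius class $C$ and $\text{N}_{K/\Q}(\p) \leq x_0$. Writing $D := \log|d_L|$, the task is to choose the constant $c_2$ so that the main term $\frac{|C|}{|G|}\Li(x_0)$ dominates both error terms appearing on the right-hand side of the inequality in the preceding theorem.

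The first ingredient is an upper bound on the degree: since $G = \Gal(L/K)$, we have $|G| \leq [L:\Q] = n_L$, and Minkowski's discriminant lower bound supplies an absolute constant $c_3$ with $n_L \leq c_3 D$ whenever $L \neq \Q$. In particular, $|C|/|G| \geq 1/|G| \geq 1/(c_3 D)$, and $L \neq \Q$ implies $|d_L| \geq 3 > e$, so $\log\log|d_L|$ is defined and positive. The substantive step is then to verify, for $x_0 = c_2 D^2(\log D)^4$, the two inequalities
\[ \tfrac{|C|}{|G|}\Li(x_0) \geq 2 c_1 \tfrac{|C|}{|G|} x_0^{1/2}\log(|d_L| x_0^{n_L}) \quad \text{and} \quad \tfrac{|C|}{|G|}\Li(x_0) \geq 2 c_1 \log|d_L|. \]
Dividing the first by $|C|/|G|$ reduces it to $\Li(x_0) \geq 2c_1 x_0^{1/2}(\log|d_L| + n_L \log x_0)$, and for the second it suffices to show $\frac{1}{|G|}\Li(x_0) \geq 2c_1 D$.

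Using $\Li(x) \asymp x/\log x$ on $[2,\infty)$, one has $\Li(x_0) \asymp D^2(\log D)^3$, while $x_0^{1/2} \asymp D(\log D)^2$, $\log x_0 \asymp \log D$, and $\log|d_L| + n_L \log x_0 \ll D\log D$ by the Minkowski bound on $n_L$. Multiplying, the right-hand side of the first inequality is of order $D(\log D)^2 \cdot D\log D = D^2(\log D)^3$, matching the main term, so both inequalities hold for $c_2$ sufficiently large in terms of $c_1$ and $c_3$; the second follows even more comfortably from $\frac{1}{|G|}\Li(x_0) \gg D(\log D)^3 \gg D$. This forces $\pi_C(x_0, L/K) > 0$ and produces the desired prime $\p$. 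The case $L = \Q$ is immediate because $G$ is trivial, every unramified prime lies in the unique conjugacy class, and $\p = (2)$ suffices.

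The main technical nuisance is bookkeeping of absolute constants: tracking the Minkowski constant $c_3$, the implicit constants in $\Li(x) \asymp x/\log x$ over the possibly small intermediate range of $x_0$, and the effect of the lower-order $\log\log|d_L|$ factor when $|d_L|$ is near its Minkowski floor. The degenerate regime is actually favorable rather than obstructive, since small $|d_L|$ forces small $n_L$, so one can absorb everything into a single absolute constant $c_2$ by taking it large enough at the outset.
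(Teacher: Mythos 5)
The paper does not prove this statement—it is quoted verbatim as \cite[Corollary 1.2]{Lagarias_Odlyzko}—but your derivation from the preceding effective Chebotarev estimate (substitute $x_0 = c_2(\log|d_L|)^2(\log\log|d_L|)^4$, use $|C|/|G|\ge 1/n_L$ and the Minkowski-type bound $n_L \ll \log|d_L|$, and check that the main term dominates both error terms for $c_2$ large) is correct and is essentially the argument Lagarias and Odlyzko themselves give. The only point deserving explicit care is that $\log x_0$ contains $\log c_2$, so the decisive comparison is $c_2^{1/2}$ against a power of $\log c_2$, which still closes; and your observation that the small-$|d_L|$ regime is harmless is right, since $\log\log|d_L|\ge\log\log 3>0$ keeps $x_0\gg c_2$ there.
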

The above is a non-nullity result about $\pi_C(x,L/K)$; it asserts the size of $x$ we must take to ensure that $\pi_C(x,L/K)$ is nonzero, that is, there is some unramified prime ideal $\p$ of $K$ whose Artin symbol hits $C$ and with norm smaller than $x$.
\begin{theorem} \cite[Th\'eor\`eme 4]{Oesterle}
There exists an effectively computable positive absolute constant $c_3$ such that if GRH and Artin's Conjecture (AC) hold for the Artin L-functions of $L/\Q$, $L \neq \Q$, then for every conjugacy class $C$ of $G$, we have that 
    \[ \pi_C(x, L/K) \geq 1 \]
    for all $x \geq 2$ such that $x \geq c_3(\log |d_L|)^2$.
\end{theorem}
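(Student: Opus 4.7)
The statement is Oesterlé's refinement, under Artin's conjecture, of the effective Chebotarev theorem of Lagarias--Odlyzko. The plan is to start from the explicit formula for a smoothed prime-counting sum attached to each irreducible Artin $L$-function, decompose the indicator of the conjugacy class $C$ into those characters, and optimize to save the $(\log\log|d_L|)^4$ factor present in Lagarias--Odlyzko's bound. Concretely, for each irreducible character $\chi$ of $G$, under Artin's conjecture the $L$-function $L(s,\chi,L/K)$ is entire of finite order, and under GRH all non-trivial zeros $\rho$ satisfy $\re(\rho)=1/2$, so one obtains an explicit formula of Riemann--von Mangoldt type for the Chebyshev-like sum
\[
\psi_C(x,L/K)=\sum_{\substack{\p^m \text{ unram.},\ \left(\frac{L/K}{\p^m}\right)=C\\ \text{N}\p^m\le x}} \log \text{N}\p.
\]

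First I would apply the orthogonality relation $\mathbb{1}_C=\frac{|C|}{|G|}\sum_{\chi}\overline{\chi(g_C)}\chi$ (with $g_C\in C$) to write $\psi_C(x,L/K)$ as a linear combination of the sums $\psi(x,\chi)=-\sum_{\text{N}\p^m\le x}\chi(\Frob_{\p}^m)\log\text{N}\p$. For each such sum, the standard truncated explicit formula under GRH gives a main term $\delta(\chi)x$ (nonzero only for the trivial character) plus an error essentially bounded by
\[
x^{1/2}\log(x^{n_L}|d_L|)\log x
\]
after using the Weil-type estimate $\sum_{|\gamma|\le T}1\ll n_L\log T+\log|d_L|$ for the number of zeros in a vertical strip of height $T$, together with the conductor--discriminant formula $\prod_\chi \mathfrak{f}(\chi)^{\chi(1)}=|d_L|$. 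Summing weighted by $|\chi(g_C)|\le \chi(1)$ and using $\sum_\chi \chi(1)^2=|G|$ yields
\[
\psi_C(x,L/K)=\frac{|C|}{|G|}\,x+O\!\bigl(|C|\,x^{1/2}\log(x^{n_L}|d_L|)\bigr),
\]
without the factor of $(\log\log|d_L|)^4$ that arose in Lagarias--Odlyzko from their reduction to abelian $L$-functions via Brauer induction.

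Next I would translate the $\psi_C$ bound into a lower bound for $\pi_C(x,L/K)$ by subtracting the contribution of prime powers of order $\ge 2$, which is bounded by $O(x^{1/2})$, and dividing by $\log x$. Choosing $x\ge c_3(\log|d_L|)^2$ with $c_3$ sufficiently large forces the main term $\frac{|C|}{|G|}\frac{x}{\log x}$ to dominate the error, so $\pi_C(x,L/K)\ge 1$. To control $c_3$ uniformly, I would use Minkowski's inequality $\log|d_L|\ge c\,n_L$ to absorb the $n_L$ appearing inside the logarithm of the error term into a constant multiple of $\log|d_L|$; this is precisely the step that allows $x^{1/2}\log(x^{n_L}|d_L|)$ to be replaced by $x^{1/2}\log(x|d_L|)$ up to an absolute constant.

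The main obstacle is the explicit formula step: one must choose the smoothing kernel and the truncation parameter $T$ carefully so that the sum over zeros $\sum_{|\gamma|\le T}x^{\rho}/\rho$ is bounded by $x^{1/2}\log(x|d_L|)/\log x$ with an absolute constant, rather than something depending on $n_L$ separately. This is the technical core of Oesterlé's argument and is where Artin's conjecture genuinely helps, since it ensures no poles of $L(s,\chi)$ off $s=1$ and permits a direct zero-density estimate for each $\chi$. Once that estimate is in hand, everything else is bookkeeping and the final optimization $x\asymp(\log|d_L|)^2$ falls out by balancing the $x/\log x$ main term against the $x^{1/2}\log|d_L|$ error.
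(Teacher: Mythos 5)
This theorem is not proved in the paper at all: it is quoted from Oesterl\'e, and the paper explicitly remarks that his proof ``was seemingly never published.'' So there is no in-paper argument to compare against; what follows is an assessment of your sketch on its own terms.

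Your outline follows the route one would expect (character-orthogonality decomposition of $\mathbb{1}_C$, Artin's conjecture to work directly with the irreducible $L(s,\chi,L/K)$ rather than passing through Brauer induction, a smoothed explicit formula, then balancing main term against error), but it has a genuine gap at exactly the point that makes the theorem nontrivial. First, the displayed error term $O\bigl(|C|\,x^{1/2}\log(x^{n_L}|d_L|)\bigr)$ is not strong enough to yield the stated conclusion: with $x\asymp(\log|d_L|)^2$ one has $\log x\asymp\log\log|d_L|$, and Minkowski's bound $n_L\ll\log|d_L|$ only gives $\log(x^{n_L}|d_L|)\ll\log|d_L|\log\log|d_L|$, so the error still exceeds $x^{1/2}\log|d_L|$ by a $\log\log$ factor and the inequality $x\ge c_3(\log|d_L|)^2$ does not close. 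Your claim that Minkowski's inequality lets you replace $x^{1/2}\log(x^{n_L}|d_L|)$ by $x^{1/2}\log(x|d_L|)$ ``up to an absolute constant'' is false: $n_L\log x+\log|d_L|$ is not $O(\log x+\log|d_L|)$ uniformly. The entire content of Oesterl\'e's improvement over Lagarias--Odlyzko is precisely the removal of the $\log x$ factor attached to $n_L$ in the zero-sum estimate (via a carefully chosen smoothing kernel and an explicit zero-counting argument for each $\chi$), and you acknowledge this as ``the technical core'' but leave it entirely as a black box; without it the argument reproduces a bound of the shape $(\log|d_L|)^2(\log\log|d_L|)^{2}$ rather than $(\log|d_L|)^2$. (Your diagnosis that the $(\log\log|d_L|)^4$ in Lagarias--Odlyzko comes from Brauer induction is also off: their Theorem~1.1 error term carries no such factor; it arises only when one solves $x/\log x\gg x^{1/2}\log(|d_L|x^{n_L})$ for $x$.) Two smaller slips: the orthogonality prefactor should make the error $\frac{|C|}{|G|}$ times the zero-sum bound, not $|C|$ times it --- taken literally your bound would force $c_3$ to depend on $|G|$ --- and since $c_3$ must be \emph{effective}, every implied constant in the explicit formula and zero-density steps must be made numerical, which is additional work your sketch does not address.
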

Oesterl\'e \cite[Th\'eor\`eme 4]{Oesterle} finds that $c_3 = 70$, although his proof was seemingly never published. An improvement to Lagarias and Odlyzko is given by Bach-Sorenson \cite{bach-sorenson-chebotarev}:
\begin{theorem}\cite[Theorem 5.1]{bach-sorenson-chebotarev}\label{thm:bach-sorenson}
    Assume GRH. Let $K/\Q$ be a Galois extension of number fields, with $K \neq \Q$. Let $d_K$ denote the discriminant of $K$. Let $n_K$ denote the degree of $K$. Let $C \subseteq \Gal(K/\Q)$ be a nonempty subset closed under conjugation. Then, there is a prime $p$ of $\Q$ unramified in $K$ with \[ \left( \frac{K/\Q}{p} \right) \subseteq C, \] satisfying
    \[ p \leq (a\log |d_K| + bn_K + c)^2 \]
    for some triple $(a,b,c)$ taken from \cite[Table 3]{bach-sorenson-chebotarev} according to the quantities $\log |d_K|$ and $n_K$. We may take $a = 4$, $b=2.5$, and $c=5$ to cover all cases of $\log |d_K|$ and $n_K = [K:\Q]$.
\end{theorem}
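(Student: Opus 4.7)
The plan is to follow the Lagarias-Odlyzko strategy for effective Chebotarev, but to replace their sharp cutoff by a carefully chosen smooth weight in order to sharpen the numerical constants, as Bach-Sorenson do. Write $G = \Gal(K/\Q)$. Since $C$ is closed under conjugation, $\mathbf{1}_C$ is a class function, so it has an expansion $\mathbf{1}_C = \sum_\chi \alpha_\chi \chi$ over the irreducible characters of $G$ with explicit coefficients $\alpha_\chi = \tfrac{|C|}{|G|}\,\overline{\chi(C)}$, giving
\[ \pi_C(x, K/\Q) = \sum_\chi \alpha_\chi \sum_{\substack{\p\,\text{unram.}\\ N\p\leq x}} \chi(\Frob_\p). \]

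Next, I would apply Brauer's induction theorem to write each $\chi$ as an integer combination of characters $\Ind_H^G \psi$ for one-dimensional characters $\psi$ of subgroups $H\leq G$; the corresponding Artin $L$-functions then factor into Hecke $L$-functions $L(s,\psi)$ over the fixed fields $K^H$, so it suffices to control the smoothed sums
\[ \Psi^F(x,\psi) := \sum_{\mathfrak{a}} \Lambda(\mathfrak{a})\, \psi(\mathfrak{a})\, F\!\left(\frac{\log N\mathfrak{a}}{\log x}\right) \]
for a suitable smooth weight $F$ of Beurling-Selberg type. Under GRH the explicit formula gives
\[ \Psi^F(x,\psi) = \widehat F(0)\, x \cdot \mathbf{1}_{\psi=1} - \sum_{\rho} \widehat F(\rho)\, x^{\rho} + R(F,x,\psi), \]
where $\rho = \tfrac12 + i\gamma$ runs over nontrivial zeros and $|R|$ is controlled by $n_K \log|d_K|$ times a norm of $F$.

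I would then bound the zero sum by $\sqrt{x}\sum_\rho |\widehat F(\tfrac12+i\gamma)|$, combining the Stark-Odlyzko density estimate $N_\psi(T) \ll T \log \Ff_\psi + n_K T \log T$ with the rapid decay of $\widehat F$, and convert $\sum_\chi \chi(1) \log \Ff_\chi$ to $\log |d_K|$ via the conductor-discriminant formula. Comparing the main term $\tfrac{|C|}{|G|} \widehat F(0)\, x$ against the total error yields a condition of the form $\sqrt{x} \geq a \log |d_K| + b\, n_K + c$ which forces $\pi_C(x, K/\Q) \geq 1$, hence the existence of an unramified prime $p$ with $\left(\tfrac{K/\Q}{p}\right) \subseteq C$ and $p \leq (a\log|d_K| + b n_K + c)^2$.

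The main obstacle is the numerical optimization: the three constants depend sensitively on the support width of $F$, the truncation height in the zero sum, and the chosen upper bounds on $|\widehat F|$. Following Bach-Sorenson, I would make specific parameter choices to produce each row of Table 3, then verify that the uniform triple $(a,b,c) = (4, 2.5, 5)$ dominates every entry. The delicacy concentrates in the regimes where $\log|d_K|$ or $n_K$ is small, since there the asymptotic zero-counting estimates are least sharp and one must handle the low-height zeros and boundary terms in the explicit formula by hand rather than via their leading-order approximations.
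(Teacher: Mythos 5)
This statement is quoted verbatim from Bach--Sorenson and the paper offers no proof of it at all --- it is an imported black box, so there is no internal argument to compare yours against. Your sketch is a credible outline of how such results are actually proved in the literature (Lagarias--Odlyzko's effective Chebotarev under GRH, sharpened by Bach--Sorenson's choice of weight), and the structural steps you list --- class-function expansion of $\mathbf{1}_C$, reduction to Hecke $L$-functions, explicit formula with a smoothing kernel, zero-counting via the conductor--discriminant formula, then a final comparison forcing $\sqrt{x} \geq a\log|d_K| + b n_K + c$ --- are the right ones. Two caveats. First, the entire content of this theorem, as it is used later in the paper (to build Table~\ref{tab:bach_sorenson_gen_values} and Proposition~\ref{cor:bach_sorenson_condense}, and hence every numerical constant in the main theorems), is the explicit table of triples and the uniform triple $(4,2.5,5)$; your plan defers precisely this to ``specific parameter choices following Bach--Sorenson,'' so as a self-contained proof it stops exactly where the work begins. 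Second, a technical point worth flagging: Bach--Sorenson (like Lagarias--Odlyzko) do not expand $\mathbf{1}_C$ over all irreducible characters of $G$; they use Deuring's reduction to the cyclic subextension $K/K^{\langle\sigma\rangle}$ for a fixed $\sigma\in C$, so that only genuine Hecke $L$-functions appear and no appeal to Brauer induction or to the analytic continuation of general Artin $L$-functions is needed. Your route via Brauer induction can be made to work under GRH --- the relevant zeros are all zeros of $\zeta_K$, and $\sum_\chi \chi(1)\log \Ff_\chi = \log|d_K|$ controls the aggregate zero count --- but you would need to justify the explicit formula for virtual-character $L$-functions (which are ratios, not products, of Hecke $L$-functions) and track the signs in the induction coefficients; and in the Deuring reduction one must also discard the contribution of primes of $K^{\langle\sigma\rangle}$ of residue degree greater than one, a step absent from your outline.
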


A corollary to the above is given in Mayle-Wang \cite[Corollary 6]{Mayle-Wang} when we need to pick the prime $p$ to be coprime to a given positive integer $m$.

\begin{corollary}\cite[Corollary 6]{Mayle-Wang}\label{cor:bach-sorenson}
    Assume GRH. Let $K/\Q$ be a Galois extension of number fields, with $K \neq \Q$. Let $m$ be a positive integer, and set $\tilde{K} = K(\sqrt{m})$. Denote $d_{\tilde{K}}$ to be the absolute value of the discriminant of $\tilde{K}$. Let $n_{\tilde{K}}$ denote the degree of $\tilde{K}$. Let $C \subseteq \Gal(K/\Q)$ be a nonempty subset that is closed under conjugation. Then there exists a prime number $p$ not dividing $m$ that is unramified in $K/\Q$ with $\left(\frac{K/\Q}{p}\right) \subseteq C$ and satisfying
    \[ p \leq (\tilde{a}\log |d_{\tilde{K}}| + \tilde{b}n_{\tilde{K}} + \tilde{c})^2, \]
    for some triple $(\tilde{a},\tilde{b},\tilde{c})$ taken from \cite[Table 3]{bach-sorenson-chebotarev} according to the quantities $\log |d_{\tilde{K}}|$ and $n_{\tilde{K}}$. We may take $\tilde{a} = 4$, $\tilde{b} = 2.5$, and $\tilde{c} = 5$ to cover all cases of $\log |d_{\tilde{K}}|$ and $n_{\tilde{K}} = [\tilde{K}:\Q]$.
\end{corollary}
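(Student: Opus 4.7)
The plan is to reduce Corollary \ref{cor:bach-sorenson} to Theorem \ref{thm:bach-sorenson} by applying the latter to the compositum $\tilde K = K(\sqrt m)$, which is Galois over $\Q$ since both $K/\Q$ and $\Q(\sqrt m)/\Q$ are. The key step is to choose a conjugation-closed subset $\tilde C \subseteq \Gal(\tilde K/\Q)$ whose image in $\Gal(K/\Q)$ lies in $C$ and whose elements automatically force the resulting prime $p$ to be coprime to $m$.

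To build $\tilde C$, consider the restriction map $\pi \colon \Gal(\tilde K/\Q) \to \Gal(K/\Q)$ and the sign character $\varepsilon \colon \Gal(\tilde K/\Q) \to \{\pm 1\}$ recording the action on $\sqrt m$. In the generic case $\sqrt m \notin K$, I would set
\[ \tilde C = \{\sigma \in \pi^{-1}(C) : \varepsilon(\sigma) = -1\}. \]
Because each element of $C$ has exactly two lifts under $\pi$, one in each fiber of $\varepsilon$, the set $\tilde C$ is nonempty; and since $\varepsilon$ is a homomorphism into an abelian group and is therefore constant on conjugacy classes, $\tilde C$ inherits conjugation-closedness from $C$. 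In the degenerate case $\sqrt m \in K$, one simply takes $\tilde C = C$ with $\tilde K = K$.

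Applying Theorem \ref{thm:bach-sorenson} to the pair $(\tilde K, \tilde C)$ yields a prime $p$ unramified in $\tilde K$ with $\left(\frac{\tilde K/\Q}{p}\right) \subseteq \tilde C$ and with $p \leq (\tilde a \log|d_{\tilde K}| + \tilde b\, n_{\tilde K} + \tilde c)^2$, where the triple $(\tilde a,\tilde b,\tilde c)$ is read off \cite[Table 3]{bach-sorenson-chebotarev}. Since $\tilde K \supseteq K$, unramifiedness in $\tilde K$ implies unramifiedness in $K$, and $\pi(\Frob_p) \subseteq C$ gives the desired Chebotarev condition. For the coprimality $p \nmid m$: in the generic case, $\varepsilon(\Frob_p) = -1$ says $p$ is inert in $\Q(\sqrt m)$, so $m$ is a quadratic non-residue modulo $p$, whence $m \not\equiv 0 \pmod p$; in the degenerate case $\sqrt m \in K$, the subfield containment $\Q(\sqrt{\rad m}) = \Q(\sqrt m) \subseteq K$ combined with unramifiedness of $p$ in $K$ forces $p \nmid \rad m$, hence $p \nmid m$ (a trivial adjustment of $\tilde C$ excludes $p = 2$ if necessary). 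I expect the main subtlety to lie in this bookkeeping of the edge case $\sqrt m \in K$, but no substantive new estimate is required.
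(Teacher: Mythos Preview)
The paper does not give its own proof of this corollary; it is quoted verbatim from Mayle--Wang. Your reduction to Theorem~\ref{thm:bach-sorenson} via the compositum $\tilde K = K(\sqrt m)$ is precisely the intended mechanism, and for every application in the paper (where $m$ is always chosen to be a radical, hence squarefree) your argument goes through. One simplification: there is no need to restrict to the fiber $\varepsilon = -1$. Taking the full preimage $\tilde C = \pi^{-1}(C)$ and observing that a prime unramified in $\tilde K$ is unramified in $\Q(\sqrt m)$, hence (for squarefree $m$) coprime to $m$, is the cleaner route and is what one expects Mayle--Wang do; it also dispenses with your separate treatment of the case $\sqrt m \in K$.

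There is, however, a genuine gap in your argument when $m$ is not squarefree. In the generic case you infer from inertness of $p$ in $\Q(\sqrt m)$ that $m$ is a nonresidue modulo $p$; but $\Q(\sqrt m) = \Q(\sqrt{m'})$ with $m'$ the squarefree part of $m$, and it is only $m'$ that is forced to be a nonresidue. For instance, with $m = 18$ the prime $p = 3$ is inert in $\Q(\sqrt{18}) = \Q(\sqrt 2)$, yet $3 \mid m$. Your degenerate case has the same defect: the asserted equality $\Q(\sqrt{\rad m}) = \Q(\sqrt m)$ is false in general (try $m = 12$, where the two fields are $\Q(\sqrt 6)$ and $\Q(\sqrt 3)$). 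Since the corollary is only ever invoked with squarefree $m$, this is a cosmetic issue, easily repaired by adding that hypothesis or by adjoining $\sqrt q$ for each prime $q \mid m$ rather than the single element $\sqrt m$.
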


For a fixed $n_K$, we say a triple $(a,b,c)$ is bigger than a triple $(a',b',c')$ (resp.\ a triple $(a',b',c')$ is smaller than a triple $(a,b,c)$) if \[ (a' \log |d_K| + b' n_K + c')^2 \leq (a \log |d_K| + b n_K + c)^2 \] for all values of $\log |d_K|$ in a row of \cite[Table 3]{bach-sorenson-chebotarev}.

We give our own version of Theorem \ref{thm:bach-sorenson} and Corollary \ref{cor:bach-sorenson}. The idea is to collapse \cite[Table 3]{bach-sorenson-chebotarev} into a 1-dimensional table, removing the condition on $\log |d_{\tilde{K}}|$ so that each triple is valid for a range of $n_{\tilde{K}}$. We do this by picking a ``pivot'' triple for each column, for which all triples appearing before the pivot are absorbed into a special constant $p_0(n_{\tilde{K}})$, and all triples appearing after are checked to be smaller than the pivot triple. 

The pivot triple in each column of \cite[Table 3]{bach-sorenson-chebotarev} is chosen to be the first one so that \eqref{program-ineq} holds.

\begin{table}
    \centering
    \begin{tabular}{c|c}
        $n_{\tilde{K}}$ & $(\bar{a}, \bar{b}, \bar{c})$ \\
        \hline
        2 & $(1.446, 0.23, 6.8)$  \\
        3-4 & $(1.527, 0.17, 6.4)$  \\
        5-9 & $(1.629, 0.11, 6.1)$  \\
        10-14 & $(1.667, 0.09, 6.0)$ \\
        15-49 & $(1.745, 0.04, 5.8)$  \\
        50-128 & $(1.755, 0, 5.7)$  \\
    \end{tabular}
    \caption{Triples $(\bar{a},\bar{b},\bar{c})$ as appearing in Proposition \ref{cor:bach_sorenson_condense}.}
    \label{tab:bach_sorenson_gen_values}
\end{table}

\begin{proposition}\label{cor:bach_sorenson_condense}
    Assume GRH. Let $K/\Q$ be a Galois extension of number fields with $K \neq \Q$. Let $m$ be a positive integer, and set $\tilde{K} = K(\sqrt{m})$. Denote $d_{\tilde{K}}$ to be the discriminant of $\tilde{K}$. Let $n_{\tilde{K}}$ denote the degree of $\tilde{K}/\Q$. Let $C \subseteq \Gal(K/\Q)$ be a nonempty subset that is closed under conjugation. Then there exists a triple $(\bar{a}, \bar{b}, \bar{c})$ taken from Table \ref{tab:bach_sorenson_gen_values}, a special constant $p_0(n_{\tilde{K}})$, and a prime number $p$ not dividing $m$ that is unramified in $K/\Q$ with $\left(\frac{K/\Q}{p}\right) \subseteq C$ and satisfying 
    \begin{align}
      \label{constant-bound} p & \leq \max((\bar{a}\log |d_{\tilde{K}}| + \bar{b}n_{\tilde{K}} + \bar{c})^2, p_0(n_{\tilde{K}})) \\
      \label{specific-bound} & \le (\bar{a} \cdot ((n_0-1) \log \rad(d_{\tilde{K}}) + n_0 \log n_0) + \bar b \cdot n_0 + \bar c)^2, 
    \end{align}
    where $n_0 = \max(72,n_{\tilde{K}})$. If we only have an upper bound for $n_{\tilde{K}} \le n_1$, then we have to replace each of $\bar a, \bar b, \bar c$ with the maximum of their values over entries in Table \ref{tab:bach_sorenson_gen_values} with $n_{\tilde{K}} \le n_1$, respectively, and $n_0$ with $\max(72,n_1)$.
\end{proposition}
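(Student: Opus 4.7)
The plan is to apply Corollary \ref{cor:bach-sorenson} and then collapse the two-dimensional Table 3 of \cite{bach-sorenson-chebotarev} into the one-dimensional Table \ref{tab:bach_sorenson_gen_values} by picking, for each range of $n_{\tilde K}$, a single ``pivot'' triple that works uniformly. Concretely, Corollary \ref{cor:bach-sorenson} supplies a prime $p\nmid m$, unramified in $K/\Q$ with $\left(\tfrac{K/\Q}{p}\right)\subseteq C$, and bounded by $(\tilde a\log|d_{\tilde K}|+\tilde b n_{\tilde K}+\tilde c)^2$ for the triple $(\tilde a,\tilde b,\tilde c)$ determined by the row $n_{\tilde K}$ and the column $\log|d_{\tilde K}|$ in that table. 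I would fix the pivot $(\bar a,\bar b,\bar c)$ for each degree-range as the first entry of the corresponding row that dominates every strictly later triple in the same row, in the ``smaller/bigger'' sense defined just above the proposition, and split the argument into two cases according to whether the Bach--Sorenson column used is at or after the pivot, or strictly before it.

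If the column used lies at or after the pivot, the definition of ``smaller'' immediately yields
\[
(\tilde a\log|d_{\tilde K}|+\tilde b n_{\tilde K}+\tilde c)^2 \le (\bar a\log|d_{\tilde K}|+\bar b n_{\tilde K}+\bar c)^2,
\]
so $p$ is bounded by the first argument of the max in \eqref{constant-bound}. If the column used lies strictly before the pivot, then $\log|d_{\tilde K}|$ is at most the right endpoint of that earlier column, so the Bach--Sorenson bound becomes an explicit function of $n_{\tilde K}$ alone. Taking $p_0(n_{\tilde K})$ to be the maximum of these finitely many quantities, over all columns strictly before the pivot for the appropriate row, gives the second argument of the max and completes \eqref{constant-bound}.

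For \eqref{specific-bound}, I would bound $\log|d_{\tilde K}|$ by $(n_{\tilde K}-1)\log\rad(d_{\tilde K})+n_{\tilde K}\log n_{\tilde K}$ using the standard Hensel--Serre estimate on the exponent of a ramified prime in the discriminant. Substituting this into the first argument of the max in \eqref{constant-bound} and monotonically inflating $n_{\tilde K}$ to $n_0=\max(72,n_{\tilde K})$ produces the right-hand side of \eqref{specific-bound} as an upper bound for the first argument. It then remains to check that, for every $n_{\tilde K}$, the constant $p_0(n_{\tilde K})$ coming from the earlier-column analysis is dominated by this same right-hand side; for small degrees this is a direct numerical verification (with $n_0=72$ uniformly absorbing the bound), and for large degrees it follows from Minkowski-type lower bounds on $|d_{\tilde K}|$, and hence on $\rad(d_{\tilde K})$, as a function of $n_{\tilde K}$. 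The final clause, where one only has an upper bound $n_{\tilde K}\le n_1$, follows by taking entry-wise maxima of $(\bar a,\bar b,\bar c)$ over the relevant rows of Table \ref{tab:bach_sorenson_gen_values} and invoking the same monotonicity with $n_0=\max(72,n_1)$.

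I expect the main obstacle to be the bookkeeping around the threshold $n_0=\max(72,n_{\tilde K})$: one must verify explicitly, row by row, that the piecewise-defined constant $p_0(n_{\tilde K})$ is dominated by $(\bar a((n_0-1)\log\rad(d_{\tilde K})+n_0\log n_0)+\bar b n_0+\bar c)^2$ in all relevant ranges, and for this the Minkowski bound $|d_{\tilde K}|\ge (\pi/4)^{n_{\tilde K}}(n_{\tilde K}^{n_{\tilde K}}/n_{\tilde K}!)^2$ must be translated into a usable lower bound on $\log\rad(d_{\tilde K})$ (compare the analogue in the commented-out Lemma following Corollary \ref{cor:bach-sorenson}). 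Verifying that each chosen pivot indeed dominates all subsequent triples in its row of Bach--Sorenson's Table 3 is less conceptually delicate but still requires a mechanical case-by-case comparison; given the Magma repository that accompanies the paper, both checks are naturally automated.
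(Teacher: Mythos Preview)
Your overall strategy matches the paper's: apply Corollary~\ref{cor:bach-sorenson}, pick a pivot triple per degree-range, absorb the pre-pivot columns into the constant $p_0(n_{\tilde K})$, check that the pivot dominates all later triples (this is the content of \eqref{constant-bound}), and then use Lemma~\ref{lem:Mayle-Wang_discriminant_bound} (your ``Hensel--Serre estimate'') together with monotonicity in $n_{\tilde K}\mapsto n_0$ to pass to \eqref{specific-bound}. All of the case-by-case comparisons are indeed delegated to a {\tt Magma} script in the paper.

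There is one substantive gap. To show that $p_0(n_{\tilde K})$ is dominated by the right side of \eqref{specific-bound}, you propose to use the Minkowski bound on $|d_{\tilde K}|$ ``and hence on $\rad(d_{\tilde K})$''. That implication fails: Minkowski constrains $|d_{\tilde K}|$ from below, but $\rad(d_{\tilde K})$ can stay bounded (a field of arbitrarily large degree ramified only at $2$ has $\rad(d_{\tilde K})=2$). The paper sidesteps this entirely. Since $\tilde K\neq\Q$, one has the trivial bound $\rad(d_{\tilde K})\ge 2$, so the right side of \eqref{specific-bound} is at least
\[
(\bar a((n_0-1)\log 2+n_0\log n_0)+\bar b\,n_0+\bar c)^2.
\]
The paper then \emph{chooses} each pivot to be the first entry in its row for which this fixed quantity already exceeds $p_0(n_{\tilde K})$; this is precisely the inequality \eqref{program-ineq}, verified by machine. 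Because Table~\ref{tab:bach_sorenson_gen_values} only runs up to $n_{\tilde K}=128$, the whole thing is a finite check and no asymptotic input is needed. (The commented-out Minkowski lemma you noticed was evidently an earlier attempt along your lines, abandoned in favor of this simpler device.) Note also that this means the pivot is selected by \eqref{program-ineq}, not by the ``dominates all later triples'' criterion you describe; the latter is then verified separately.
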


\begin{proof}
We have written a program in {\tt Magma} which verifies the required inequalities \eqref{constant-bound}. For inequality \eqref{specific-bound}, there are two parts to check. The program checks that
\begin{equation}
\label{program-ineq}
    p_0(n_{\tilde{K}}) \le (\bar{a} \cdot ((n_0-1) \log 2 + n_0 \log n_0) + \bar b \cdot n_0 + \bar c)^2,
\end{equation}
and the inequality 
\begin{equation}
    (\bar{a}\log |d_{\tilde{K}}| + \bar{b}n_{\tilde{K}} + \bar{c})^2 \le (\bar{a} \cdot ((n_0-1) \log \rad(d_{\tilde{K}}) + n_0 \log n_0) + \bar b \cdot n_0 + \bar c)^2
\end{equation}
follows from Lemma~\ref{lem:Mayle-Wang_discriminant_bound}.
\end{proof}

\begin{lemma}\cite[Lemma 7]{Mayle-Wang}\label{lem:Mayle-Wang_discriminant_bound}
    If $K/\Q$ is a nontrivial finite Galois extension, then
    \[ \left( \frac{1}{2} \log 3 \right)[K:\Q] \leq \log |d_K| \leq ([K:\Q]-1)\log \rad(d_K) + [K:\Q]\log([K:\Q]), \]
    where $d_K$ is the absolute value of the discriminant of $K$.
\end{lemma}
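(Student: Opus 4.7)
The plan is to prove the two inequalities of Lemma~\ref{lem:Mayle-Wang_discriminant_bound} independently: the upper bound via a local analysis of ramified primes, and the lower bound via Minkowski's geometry of numbers together with a small explicit check. Neither half really uses the Galois hypothesis, but the statement is only needed in that form.

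For the upper bound, I would invoke the discriminant-different relation. Writing $p\OK = \prod_{\mathfrak{p} \mid p} \mathfrak{p}^{e_\mathfrak{p}}$ with residue degrees $f_\mathfrak{p}$, one has $v_p(d_K) = \sum_{\mathfrak{p} \mid p} f_\mathfrak{p} d(\mathfrak{p}/p)$, where $d(\mathfrak{p}/p)$ is the exponent of the different at $\mathfrak{p}$. The standard estimate for the wild part of the different (Serre, \emph{Local Fields}, III.6) gives
\[
d(\mathfrak{p}/p) \leq e_\mathfrak{p} - 1 + e_\mathfrak{p}\, v_p(e_\mathfrak{p}) \leq e_\mathfrak{p} - 1 + e_\mathfrak{p}\, v_p([K:\Q]),
\]
using $e_\mathfrak{p} \mid [K:\Q]$. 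Summing over $\mathfrak{p} \mid p$, using $\sum_{\mathfrak{p} \mid p} e_\mathfrak{p} f_\mathfrak{p} = [K:\Q]$, and $\sum_{\mathfrak{p} \mid p} f_\mathfrak{p} \geq 1$, I obtain the uniform bound $v_p(d_K) \leq [K:\Q] - 1 + [K:\Q]\, v_p([K:\Q])$. Taking logarithms of $|d_K| = \prod_{p} p^{v_p(d_K)}$ and summing over the (finitely many) ramified primes gives
\[
\log|d_K| \;\leq\; ([K:\Q]-1)\sum_{p \mid d_K}\log p \;+\; [K:\Q]\sum_{p}v_p([K:\Q])\log p \;=\; ([K:\Q]-1)\log\rad(d_K) + [K:\Q]\log[K:\Q],
\]
which is the desired upper bound.

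For the lower bound $(\tfrac{1}{2}\log 3)[K:\Q] \leq \log|d_K|$, equivalently $|d_K| \geq 3^{n/2}$ with $n = [K:\Q]$, I would start from Minkowski's classical bound in the worst signature,
\[
|d_K|^{1/2} \;\geq\; \frac{n^n}{n!}\left(\frac{\pi}{4}\right)^{n/2}.
\]
At $n = 2$ this only yields $|d_K| \geq \pi^2/4 < 3$, but the complete list of quadratic discriminants forces $|d_K| \in \{3,4,5,7,8,\dots\}$ and the bound $|d_K| \geq 3 = 3^{n/2}$ holds (with equality at $K=\Q(\sqrt{-3})$). For $n \geq 3$, substituting Stirling's estimate $n! \leq e\sqrt{n}(n/e)^n$ into Minkowski reduces the claim to the elementary one-variable inequality
\[
n\bigl(2 + \log(\pi/4) - \tfrac{1}{2}\log 3\bigr) \;\geq\; 2 + \log n,
\]
whose coefficient $2 + \log(\pi/4) - \tfrac{1}{2}\log 3 \approx 1.209$ is positive, so a direct check at $n = 3$ plus a monotonicity argument in $n$ finishes the proof.

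The main obstacle is simply the small-$n$ boundary case of the lower bound, where Minkowski is just short of the required constant; this is overcome by the explicit small discriminant data for quadratic fields. The wild-ramification estimate feeding the upper bound is standard from local field theory, and everything else is bookkeeping with the factorizations $\log\rad(d_K) = \sum_{p \mid d_K}\log p$ and $\log n = \sum_p v_p(n)\log p$.
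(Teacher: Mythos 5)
The paper does not prove this lemma itself --- it is quoted from \cite[Lemma 7]{Mayle-Wang} --- and your argument is correct and is essentially the standard one underlying that citation: the upper bound via the different-exponent estimate $d(\mathfrak{p}/p)\le e_\mathfrak{p}-1+e_\mathfrak{p}v_p(e_\mathfrak{p})$ (Serre's discriminant bound), and the lower bound via Minkowski together with the explicit quadratic case $|d_K|\ge 3$, where equality for $\Q(\sqrt{-3})$ shows the constant $\tfrac12\log 3$ is sharp. The only nitpick is your aside that the Galois hypothesis is unused: your step $v_p(e_\mathfrak{p})\le v_p([K:\Q])$ does invoke $e_\mathfrak{p}\mid[K:\Q]$, though one could instead bound $e_\mathfrak{p}v_p(e_\mathfrak{p})\log p\le e_\mathfrak{p}\log[K:\Q]$ to avoid it.
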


\section{The deviation group $\delta(G)$}\label{sec:deviation_group}
In this section, we wish to construct a finite group, called the \textit{deviation group}, denoted $\delta(G)$, from which we can find a finite subset that will determine if the two representations are isomorphic or not.

Our treatment of the deviation group will follow the exposition given in Ignasi's thesis \cite{ignasi}. We note that Ignasi's exposition is, itself, taken from Ch\^{e}nevert's thesis \cite{chenevert}, whose work follows the work of Serre \cite{serre-method} (the propositions and lemmas which appear here, with the exception of Lemma \ref{lem:determinant}, can also be found in \cite[Chapter 5]{chenevert}). 

Let $G$ be a group, and $L$ be a finite extension of $\Q_\ell$, for $\ell$ prime, with ring of integers $\mathcal{O}_\lambda$, maximal ideal $\lambda$, and residue field $k = \mathcal{O}_\lambda/\lambda\mathcal{O}_\lambda$. We let $\pi$ be a uniformizer, so $\lambda = \pi\mathcal{O}_\lambda$. Let $\rho_1, \rho_2: G \to \GL_n(\mathcal{O}_\lambda)$ be two $\lambda$-adic representations. We begin by extending the map $\rho_1 \times \rho_2 : G \to \GL_n(\mathcal{O}_\lambda) \times \GL_n(\mathcal{O}_\lambda)$ from $G$ to the group ring $\mathcal{O}_\lambda[G]$. 



We define the map $\rho: \mathcal{O}_\lambda[G] \to M_n(\mathcal{O}_\lambda) \oplus M_n(\mathcal{O}_\lambda)$ to be
\[ \rho\left( \sum a_ig_i \right) = \left( \sum a_i \rho_1(g_i), \sum a_i \rho_2(g_i) \right). \]

Let $M$ be the full image of $\rho$ inside $M_n(\mathcal{O}_\lambda) \oplus M_n(\mathcal{O}_\lambda)$, and consider the composition map $\delta: G \xrightarrow{\rho} M^\times \to (M/\lambda M)^\times$. 
\begin{definition}\cite[Definition 2.1.1]{ignasi}\label{def:deviation_group}
    The image $\delta(G)$ of $G$ inside $(M/\lambda M)^\times$ is called the \textit{deviation group} of the pair of representations $\rho_1,\rho_2$.
\end{definition}

\begin{remark}
    Since $M$ is a subalgebra of $R = M_n(\mathcal{O}_\lambda) \times M_n(\mathcal{O}_\lambda)$, it might be tempting to think $\delta(G)$ is a subgroup of $(R/\lambda R)^\times = \GL_2(k) \times \GL_2(k)$ but this may not be the case. See the remark after \cite[Definition 2.1.1]{ignasi}.
\end{remark}

The deviation group turns out to be finite, as described by the following proposition. 
\begin{proposition}\cite[Proposition 2.1.2]{ignasi}
    The group $\delta(G)$ is finite, and in particular we have $|\delta(G)| \leq |k|^{2n^2}$.
\end{proposition}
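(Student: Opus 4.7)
The plan is to bound $|\delta(G)|$ by bounding the size of the ambient monoid $M/\lambda M$, which contains $(M/\lambda M)^\times \supseteq \delta(G)$.

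First I would observe that $M$ sits as an $\mathcal{O}_\lambda$-submodule of $M_n(\mathcal{O}_\lambda) \oplus M_n(\mathcal{O}_\lambda)$, which is free of rank $2n^2$ over the discrete valuation ring $\mathcal{O}_\lambda$. Since $\mathcal{O}_\lambda$ is a PID, submodules of a free module of finite rank are themselves free of rank at most that of the ambient module. Hence $M$ is a free $\mathcal{O}_\lambda$-module of rank $r \le 2n^2$.

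Next I would pass to the quotient by the maximal ideal. Reducing modulo $\lambda$ gives $M/\lambda M \cong k^{r}$ as a $k$-vector space, so
\[ |M/\lambda M| = |k|^r \le |k|^{2n^2}. \]
Since the group of units $(M/\lambda M)^\times$ is contained (as a set) in the finite ring $M/\lambda M$, we get
\[ |(M/\lambda M)^\times| \le |M/\lambda M| \le |k|^{2n^2}. \]
By the definition of the deviation group as the image of $G$ under $\delta : G \xrightarrow{\rho} M^\times \to (M/\lambda M)^\times$, we have $\delta(G) \subseteq (M/\lambda M)^\times$, and therefore
\[ |\delta(G)| \le |k|^{2n^2}, \]
which also shows $\delta(G)$ is finite.

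There is no genuine obstacle here; the only subtlety is the reminder that $\delta(G)$ lives in $(M/\lambda M)^\times$ rather than in a product of matrix algebras over $k$ (as noted in the remark after Definition \ref{def:deviation_group}), so one must bound it through $M/\lambda M$ itself and not through the ambient $(M_n(k) \oplus M_n(k))^\times$. The key ingredient is simply that submodules of free modules of finite rank over a PID are free of no larger rank.
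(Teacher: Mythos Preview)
Your proof is correct and follows essentially the same route as the paper: both argue that $M$ is a free $\mathcal{O}_\lambda$-module of rank $r \le 2n^2$ (you invoke the PID property, the paper invokes locality --- either works since $\mathcal{O}_\lambda$ is a DVR), then bound $|\delta(G)| \le |(M/\lambda M)^\times| \le |k|^r \le |k|^{2n^2}$.
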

\begin{proof}
    $M$ is a submodule of the free $\mathcal{O}_\lambda$-module $M_n(\mathcal{O}_\lambda) \oplus M_n(\mathcal{O}_\lambda)$. Since $\mathcal{O}_\lambda$ is a local ring, $M$ is free and is of rank $r$, where $r$ satisfies
    \[ r \leq \rank(M_n(\mathcal{O}_\lambda) \oplus M_n(\mathcal{O}_\lambda)) = 2n^2. \]
    Given $M$ is a $\mathcal{O}_\lambda$-module, $M/\lambda M$ is a $k$-algebra of dimension $r$. Hence,
    \[ |\delta(G)| \leq |(M/\lambda M)^\times| \leq |k|^r \leq |k|^{2n^2} \]
    as claimed.
\end{proof}

\begin{remark}
    A similar bound on $|\delta(G)|$ is employed by Mayle-Wang in their proof of Theorem \ref{thm:mayle_wang_result}, although they do not explicitly mention the deviation group. See the proof in \cite[Theorem 2]{Mayle-Wang}.
\end{remark}

Let us turn our attention now to the practical use of $\delta(G)$, that being its ability to help us determine when two representations are isomorphic.
\begin{proposition}\cite[Proposition 2.1.3]{ignasi}\label{prop:ignasi_1}
    Let $\Sigma \subseteq G$ be a subset that surjects onto $\delta(G)$. Then, $\rho_1 \sim \rho_2$ if and only if $\tr(\rho_1(g)) = \tr(\rho_2(g))$ for all $g \in \Sigma$.
\end{proposition}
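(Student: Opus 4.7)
The $(\Rightarrow)$ direction is immediate: if $\rho_1 \sim \rho_2$, then $\tr \rho_1(g) = \tr \rho_2(g)$ for every $g \in G$, hence in particular on $\Sigma$. The whole content is in the converse, and the plan is to promote the hypothesis "traces agree on $\Sigma$" to "traces agree on all of $G$" using a Nakayama--style lifting along the reduction map $M \to M/\lambda M$.

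Concretely, I would introduce the $\mathcal{O}_\lambda$-linear form
\[
t : M \To \mathcal{O}_\lambda, \qquad t(A_1, A_2) = \tr A_1 - \tr A_2,
\]
which is well defined because $M$ sits inside $M_n(\mathcal{O}_\lambda) \oplus M_n(\mathcal{O}_\lambda)$. The trace hypothesis on $\Sigma$ is exactly that $t(\rho(g)) = 0$ for every $g \in \Sigma$. The goal is then to show $t \equiv 0$ on all of $M$.

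The key step is a spanning claim: the $\mathcal{O}_\lambda$-submodule of $M$ generated by $\rho(\Sigma)$ is all of $M$. By the very construction of $M$ as the image of $\mathcal{O}_\lambda[G]$ under $\rho$, the set $\rho(G)$ spans $M$ as an $\mathcal{O}_\lambda$-module. Reducing modulo $\lambda$, the set $\rho(G) \bmod \lambda$ spans the finite $k$-algebra $M/\lambda M$ as a $k$-vector space. But the image of $\rho(G)$ in $(M/\lambda M)^\times \subseteq M/\lambda M$ is by definition $\delta(G)$, and by hypothesis $\Sigma$ surjects onto $\delta(G)$, so $\rho(\Sigma) \bmod \lambda$ already spans $M/\lambda M$ over $k$. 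Since $M$ is a finitely generated module over the local ring $\mathcal{O}_\lambda$, Nakayama's lemma then upgrades this to: $\rho(\Sigma)$ spans $M$ as an $\mathcal{O}_\lambda$-module. Because $t$ is $\mathcal{O}_\lambda$-linear and vanishes on this spanning set, $t \equiv 0$ on $M$, i.e.\ $\tr \rho_1(g) = \tr \rho_2(g)$ for every $g \in G$.

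To finish, I would invoke the Brauer--Nesbitt theorem (applied after extending scalars to the fraction field $L$ of $\mathcal{O}_\lambda$, a field of characteristic $0$): two $L$-representations of $G$ with equal character have isomorphic semisimplifications, and in characteristic $0$ the assumption (implicit for the $\lambda$-adic Galois representations of Tate modules of elliptic curves, where the proposition will be applied) that the $\rho_i$ are semisimple then yields $\rho_1 \sim \rho_2$. The only genuinely delicate point in the argument is the spanning step: one must keep straight that $M$ is built as an $\mathcal{O}_\lambda$-module span (not merely an algebra span) of $\rho(G)$, and that $\delta(G) \subseteq M/\lambda M$ generates it as a $k$-module, so that Nakayama applies to the lift $\rho(\Sigma) \subseteq M$; once this is established, $\mathcal{O}_\lambda$-linearity of $t$ and Brauer--Nesbitt do the rest.
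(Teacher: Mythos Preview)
Your argument is correct, and it is essentially the standard one (due to Serre, as transmitted through Ch\^{e}nevert and Ignasi). Note, however, that the paper does not actually prove this proposition: it is stated with a citation to \cite[Proposition 2.1.3]{ignasi} and no proof is given.

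The closest thing to a proof in the paper is that of Proposition~\ref{prop:Mayle-Wang}, which the authors explicitly flag as ``very similar''. That argument is the contrapositive of yours, organized slightly differently: rather than using Nakayama to show that $\rho(\Sigma)$ spans $M$ and then killing the linear form $t = \tr - \tr$, they assume a trace discrepancy exists, let $\alpha$ be the largest power of $\lambda$ dividing $t$ on all of $M$, and pass to the induced nonzero $k$-linear map $\bar\phi : M/\lambda M \to k$ given by $\lambda^{-\alpha} t \bmod \lambda$. The nonvanishing locus of $\bar\phi$ on $\delta(G)$ is then the desired conjugation-stable set. Your Nakayama step and their $\alpha$-scaling step are two ways of expressing the same fact: the trace-difference form is determined by its values on any lift of $\delta(G)$.

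One minor remark: your invocation of Brauer--Nesbitt requires semisimplicity of the $\rho_i$ over $L$, which you correctly note is an implicit hypothesis (satisfied in the application to Tate modules by Faltings). The paper's formulation of the companion Proposition~\ref{prop:Mayle-Wang} sidesteps this by working purely at the level of traces, never asserting that equal traces force an isomorphism.
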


Before we introduce the next proposition, some further explanations are needed (following \cite{ignasi}). We assume now the representations $\rho_1, \rho_2 : G \to \GL_n(\mathcal{O}_\lambda)$ are not isomorphic, that is, they are not conjugate in $\GL_n(\mathcal{O}_\lambda)$, but that the residual representations $\bar{\rho}_1$ and $\bar{\rho}_2$ obtained from $\rho_1$ and $\rho_2$ by reduction modulo $\lambda$ are isomorphic. We then have an equality $\bar{\rho}_1 = P\bar{\rho}_2P^{-1}$ for some matrix $P \in M_n(k)$.

Define $\beta$ to be the largest integer such that $\rho_1$ and $\rho_2$ are conjugated modulo $\lambda^{\beta}$, that is, there is a matrix $P \in \GL_n(\mathcal{O}_\lambda)$ such that $\rho_1 \equiv P\rho_2P^{-1} \Mod{\lambda^\beta}$; we then have $\beta \geq 1$, since $\bar{\rho}_1 \cong \bar{\rho}_2$. In addition, there is an integer $\alpha \geq 1$ such that $\tr(\rho_1) \equiv \tr(\rho_2) \Mod{\lambda^\alpha}$ and $\tr(\rho_1) \not\equiv \tr(\rho_2) \Mod{\lambda^{\alpha+1}}$; in particular, $\rho_1$ and $\rho_2$ are not conjugate modulo $\lambda^{\alpha+1}$, so $\beta \leq \alpha$. Given that $\rho_1$ and $\rho_2$ are conjugate modulo $\lambda^\beta$ but not conjugate modulo $\lambda^{\beta+1}$, if we replace $\rho_2$ with a conjugate we may assume $\rho_1 \equiv \rho_2 \Mod{\lambda^\beta}$ but $\rho_1 \not\equiv \rho_2 \Mod{\lambda^{\beta+1}}$.

Hence, for any $g \in G$, we have
\begin{equation}\label{eq:theta_g_def} 
    \rho_2(g) - \rho_1(g) \equiv 0 \Mod{\lambda^\beta} \Rightarrow \rho_2(g) - \rho_1(g) = \theta_g \pi^\beta
\end{equation}
for some $\theta_g \in M_n(\mathcal{O}_\lambda)$ and $\pi$ a uniformizer of $\lambda$. Rearranging, we get an equation for $\rho_2(g)$ of the form
\begin{equation}\label{eq:rho_1_formula} 
\rho_2(g) = (I_n+\theta_g\pi^\beta\rho_1(g)^{-1})\rho_1(g),
\end{equation}
where $I_n$ is the $n \times n$ identity matrix. Let $\theta: G \to M_n(\mathcal{O}_\lambda)$ be the map $g \to \theta_g\rho_1(g)^{-1}$, and notice that \eqref{eq:rho_1_formula} becomes
\begin{equation}
    \rho_2(g) = (I_n+\pi^\beta\theta(g))\rho_1(g).
\end{equation}

\begin{proposition}\cite[Proposition 2.2.1]{ignasi}\label{prop:phi_def}
    Let $\rho_1, \rho_2 : G \to \GL_n(\mathcal{O}_\lambda)$ be representations that are not isomorphic, and suppose $\bar{\rho}_1, \bar{\rho}_2: G \to \GL_n(k)$ are isomorphic. Let $\beta$ be the largest integer such that $\rho_1$ and $\rho_2$ are conjugate modulo $\lambda^\beta$, and as above, assume $\rho_2$ has been replaced by a conjugate such that $\rho_1 \equiv \rho_2 \Mod{\lambda^\beta}$. Let 
    \begin{equation}\label{eq:deviation_group_varphi_def}
    \begin{split}
        \varphi: G & \to M_n(k) \rtimes \GL_n(k) \\
        & g \mapsto (\theta(g) \Mod{\lambda}, \rho_1(g) \Mod{\lambda})
    \end{split}
    \end{equation}
    where the semidirect product is with respect to the action of $\GL_n(k)$ on $M_n(k)$ by conjugation, that is multiplication is given by
    \[ (A,B) \cdot (C,D) = (A + BCB^{-1}, BD). \]
    Then $\varphi$ is a group homomorphism which factors through the deviation group $\delta(G)$.
\end{proposition}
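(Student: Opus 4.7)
The plan is to verify the two assertions separately: first, that $\varphi$ is a group homomorphism, and second, that it descends to $\delta(G)$. Throughout, I use the defining relation $\rho_2(g) = (I_n + \pi^\beta \theta(g))\rho_1(g)$ together with the normalization $\rho_1 \equiv \rho_2 \pmod{\lambda^\beta}$, where $\beta \geq 1$.

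To establish the homomorphism property, I would expand $\rho_2(gh) = \rho_2(g)\rho_2(h)$ using the defining formula and compare with the single-element version $\rho_2(gh) = (I_n + \pi^\beta \theta(gh))\rho_1(gh)$. Subtracting $\rho_1(gh)$ from both sides, dividing by $\pi^\beta$, multiplying on the right by $\rho_1(gh)^{-1}$, and simplifying using $\rho_1(h)\rho_1(gh)^{-1} = \rho_1(g)^{-1}$ yields
\begin{equation*}
    \theta(gh) = \theta(g) + \rho_1(g)\theta(h)\rho_1(g)^{-1} + \pi^\beta \theta(g)\rho_1(g)\theta(h)\rho_1(g)^{-1}.
\end{equation*}
Reducing modulo $\lambda$ kills the last term since $\beta \geq 1$, and since $\bar{\rho}_1 = \bar{\rho}_2$ in the residue, the identity becomes $\bar{\theta}(gh) = \bar{\theta}(g) + \bar{\rho}_1(g)\bar{\theta}(h)\bar{\rho}_1(g)^{-1}$. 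Paired with the multiplicativity of $\bar{\rho}_1$, this is exactly $\varphi(g) \cdot \varphi(h)$ under the semidirect product rule.

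For the factorization through $\delta(G)$, it suffices to show that whenever $g_1, g_2 \in G$ have the same image in $(M/\lambda M)^\times$, one has $\varphi(g_1) = \varphi(g_2)$. Suppose $\rho(g_1) - \rho(g_2) = \pi(X, Y)$ for some $(X, Y) \in M$. Projecting to the first coordinate gives $\bar{\rho}_1(g_1) = \bar{\rho}_1(g_2)$. For the $\theta$-component, subtracting $\pi^\beta \theta_{g_2} = \rho_2(g_2) - \rho_1(g_2)$ from $\pi^\beta \theta_{g_1} = \rho_2(g_1) - \rho_1(g_1)$ gives
\begin{equation*}
    \pi^\beta(\theta_{g_1} - \theta_{g_2}) = \pi(Y - X).
\end{equation*}
The crucial observation is that $(X, Y)$, being in the image of $\rho$, is of the form $(X, Y) = \rho(\sigma)$ for some $\sigma = \sum a_i g_i \in \mathcal{O}_\lambda[G]$, and hence
\begin{equation*}
    Y - X = \sum a_i(\rho_2(g_i) - \rho_1(g_i)) = \pi^\beta \sum a_i \theta_{g_i} \in \pi^\beta M_n(\mathcal{O}_\lambda).
\end{equation*}
Substituting back gives $\theta_{g_1} - \theta_{g_2} \in \pi M_n(\mathcal{O}_\lambda)$, so $\bar{\theta}_{g_1} = \bar{\theta}_{g_2}$, and then $\bar{\theta}(g_i) = \bar{\theta}_{g_i}\bar{\rho}_1(g_i)^{-1}$ agree for $i = 1, 2$, yielding $\varphi(g_1) = \varphi(g_2)$.

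The main subtlety lies in the integrality cancellation in the second part: matching the single factor of $\pi$ on the right against $\pi^\beta$ on the left works only because $Y - X$ automatically acquires an extra factor of $\pi^\beta$ from the specific structure of $M$ as the image of the group algebra---not from $(X,Y)$ being an arbitrary element of $M_n(\mathcal{O}_\lambda) \oplus M_n(\mathcal{O}_\lambda)$. This additional divisibility, inherited from $\rho_1 \equiv \rho_2 \pmod{\lambda^\beta}$ on group elements, is precisely what forces $\varphi$ to be constant on the fibers of $G \to \delta(G)$.
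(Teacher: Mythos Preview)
The paper does not include its own proof of this proposition; it is quoted from \cite{ignasi} (and ultimately \cite{chenevert}) without argument. Your proof is correct and is essentially the standard one: the homomorphism property is a direct expansion of $\rho_2(gh)=\rho_2(g)\rho_2(h)$, and the factorization through $\delta(G)$ hinges on the observation that every $(X,Y)\in M$ satisfies $Y-X\in\pi^{\beta}M_n(\mathcal{O}_\lambda)$, which you extract correctly from the $\mathcal{O}_\lambda$-linearity of $\rho$ and the congruence $\rho_1\equiv\rho_2\pmod{\lambda^\beta}$.
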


\begin{remark}
    The homomorphism $\delta(G) \twoheadrightarrow \varphi(G)$ may not be injective. See \cite[Remark 2.2.2]{ignasi}.
\end{remark}

Lastly, we state a general lemma regarding determinants of matrices that we shall employ later.
\begin{lemma}\cite[Lemma 2.2.3]{ignasi}\label{lem:determinant}
    Let $R$ be a discrete valuation ring with uniformizer $\pi$, and $F$ its field of fractions. For any $A \in \GL_n(F)$,
    \[ \det(I_n+\pi A) = 1 + \pi\tr(A) + O(\pi^2). \]
\end{lemma}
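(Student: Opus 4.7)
The plan is to prove this directly by expanding the determinant and tracking the powers of $\pi$. Recall that for any matrix $A \in M_n(F)$ the polynomial identity
\[ \det(I_n + tA) = \sum_{k=0}^{n} t^k e_k(A) \]
holds in an indeterminate $t$, where $e_k(A) = \tr(\Lambda^k A)$ is the $k$-th elementary symmetric function in the eigenvalues of $A$ (in a fixed algebraic closure of $F$). In particular $e_0(A) = 1$ and $e_1(A) = \tr(A)$. Specializing $t = \pi$ gives
\[ \det(I_n + \pi A) = 1 + \pi \tr(A) + \pi^2 e_2(A) + \pi^3 e_3(A) + \cdots + \pi^n \det(A), \]
so every term beyond $1 + \pi \tr(A)$ is divisible by $\pi^2$ as a polynomial in $\pi$ with coefficients in $F$, which is what is meant by the error term $O(\pi^2)$.

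Alternatively, one can give a hands-on derivation using the Leibniz formula
\[ \det(I_n + \pi A) = \sum_{\sigma \in S_n} \mathrm{sgn}(\sigma) \prod_{i=1}^{n} \bigl( \delta_{i,\sigma(i)} + \pi A_{i,\sigma(i)} \bigr), \]
where $\delta_{i,j}$ denotes the Kronecker delta. The identity permutation contributes $\prod_i (1 + \pi A_{ii}) = 1 + \pi \tr(A) + O(\pi^2)$. Any non-identity $\sigma \in S_n$ moves at least two indices, so at least two factors in the product must be of the form $\pi A_{i,\sigma(i)}$ (the diagonal piece $\delta_{i,\sigma(i)}$ being zero at those indices), contributing a term divisible by $\pi^2$. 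Summing over all $\sigma$ yields the claim.

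There is no substantive obstacle here; the lemma is entirely elementary. The only point that merits a moment of reflection is what ``$O(\pi^2)$'' is taken to mean when $A$ is allowed to have entries in $F$ rather than $R$. In both proofs above it simply denotes an element of $F$ of the form $\pi^2 \alpha$, where $\alpha$ is a polynomial in $\pi$ with coefficients depending polynomially on the entries of $A$; this is exactly the form in which the error term is needed for the subsequent applications of the lemma.
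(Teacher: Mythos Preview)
Your argument is correct; both the symmetric-function expansion and the Leibniz-formula derivation are standard and valid, and your remark on the meaning of $O(\pi^2)$ for $A \in \GL_n(F)$ is well taken. The paper does not supply its own proof of this lemma---it merely cites \cite[Lemma 2.2.3]{ignasi}---so there is nothing to compare against beyond noting that your proof is a complete and elementary justification of the cited statement.
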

It can be difficult to compute the exact size of $\delta(G)$, or find a tighter upper bound for it. We will, in the following section, work to replace $\delta(G)$ with $\varphi(G)$ in the case of $2$-adic representations. The codomain of $\varphi$ is easily understood, and hence a bound for $|\varphi(G)|$ is easily computable. This is what allows us to prove Theorem \ref{thm:mayle_wang_improvement}.

\section{The tools of Mayle-Wang}\label{sec:tools_mayle_wang}

The methodology of Mayle-Wang relies on the following proposition that is due to Serre (a proof of which can be found in \cite[Theorem 4.7]{Alina_Kiran}). The proposition which follows is a refined version of Serre's original argument due to Mayle-Wang \cite[Proposition 12]{Mayle-Wang} in which we have reworked the statement and proof to follow the work and notation done in Section \ref{sec:deviation_group}. We note that the statement is similar to that of Proposition \ref{prop:ignasi_1}: here, we show that if the representations are not isomorphic, then their traces must disagree on some finite set. While the proofs are very similar, the advantage of the following proposition is that it is in a form to which we may readily apply Chebotarev.
\begin{proposition}\cite[Proposition 12]{Mayle-Wang}\label{prop:Mayle-Wang}
Let $n$ a positive integer. Let $G$ be a group and $\rho_1, \rho_2 : G \rightarrow \GL_n(\mathcal{O}_\lambda)$ be representations, and $\delta(G)$ the deviation group of $G$ with respect to the two representations $\rho_1$ and $\rho_2$. Suppose that there exists an element $g \in G$ such that $\tr \rho_1(g) \not= \tr \rho_2(g)$. Then there exists a subset $C \subseteq \delta(G)$ for which
\begin{enumerate}
    \item the set $C$ is non-empty and closed under conjugation by $\delta(G)$, and
    \item if the image in $\delta(G)$ of an element $g \in G$ belongs to $C$, then $\tr \rho_1(g) \not= \tr \rho_2(g)$.
\end{enumerate}
\end{proposition}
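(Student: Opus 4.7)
The plan is to construct $C$ via a trace-difference function that descends to the deviation group modulo an appropriate power of the uniformizer.

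Define $t : G \to \mathcal{O}_\lambda$ by $t(g) = \tr \rho_1(g) - \tr \rho_2(g)$. By hypothesis $t$ is not identically zero, so the quantity
$$\alpha = \min_{g \in G} v_\lambda(t(g))$$
is a well-defined nonnegative integer (where $v_\lambda$ denotes the valuation on $L$). The first step is to show that the function $g \mapsto t(g) \bmod \lambda^{\alpha+1}$ is constant on fibers of the map $\delta : G \to \delta(G)$. Suppose $\delta(g) = \delta(g')$, so that $\rho(g) - \rho(g') \in \lambda M$. Since $M$ is the image of $\mathcal{O}_\lambda[G]$ under $\rho$, it is spanned as an $\mathcal{O}_\lambda$-module by the elements $\rho(h) = (\rho_1(h), \rho_2(h))$ for $h \in G$. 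Hence there exist $b_i \in \mathcal{O}_\lambda$ and $h_i \in G$ such that, \emph{with the same coefficients in both components},
$$\rho_j(g) - \rho_j(g') = \pi \sum_i b_i \, \rho_j(h_i) \qquad (j=1,2).$$
Taking traces and subtracting gives $t(g) - t(g') = \pi \sum_i b_i\, t(h_i)$, which lies in $\lambda \cdot \lambda^\alpha \mathcal{O}_\lambda = \lambda^{\alpha+1}\mathcal{O}_\lambda$ by the minimality defining $\alpha$.

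Consequently the map $t$ induces a well-defined set map
$$\tilde t : \delta(G) \longrightarrow \lambda^\alpha\mathcal{O}_\lambda / \lambda^{\alpha+1}\mathcal{O}_\lambda \;\cong\; k, \qquad \tilde t(\delta(g)) = t(g)\bmod \lambda^{\alpha+1}.$$
By the choice of $\alpha$, there exists some $g_0 \in G$ with $v_\lambda(t(g_0)) = \alpha$, so $\tilde t$ is not identically zero. I then set
$$C = \{\, x \in \delta(G) : \tilde t(x) \neq 0 \,\}.$$
If $\delta(g) \in C$, then $t(g) \not\equiv 0 \pmod{\lambda^{\alpha+1}}$, in particular $t(g) \neq 0$, which gives condition (2).

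Finally I verify that $C$ is closed under conjugation in $\delta(G)$. Since the trace is a class function on $G$, the function $t$ is invariant under $G$-conjugation, and hence $\tilde t$ is invariant under the conjugation action of $\delta(G)$ on itself (any conjugation in $\delta(G)$ lifts, by surjectivity of $\delta : G \to \delta(G)$, to conjugation by an element of $G$). Therefore $C$ is a union of $\delta(G)$-conjugacy classes, completing the proof. The main obstacle is the bookkeeping in the descent step, namely using the precise description of $M$ as the $\mathcal{O}_\lambda$-span of the pairs $\rho(h)$ to ensure that the same coefficients $b_i$ appear in both components so that the trace-difference really gains an extra factor of $\pi$.
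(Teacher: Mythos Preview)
Your proof is correct and follows essentially the same approach as the paper: both define $\alpha$ as the minimal valuation of the trace difference, use that $M$ is the $\mathcal{O}_\lambda$-span of the $\rho(h)$ to show the map $g \mapsto \pi^{-\alpha}(\tr\rho_1(g)-\tr\rho_2(g))\bmod\lambda$ descends to $\delta(G)$, and take $C$ to be the nonvanishing locus. The only cosmetic difference is that the paper packages the descent step as an $\mathcal{O}_\lambda$-module homomorphism $\phi:M\to\mathcal{O}_\lambda$ passing to $\bar\phi:M/\lambda M\to k$, whereas you verify well-definedness on fibers of $\delta$ directly; the underlying computation is identical.
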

\begin{proof}
    Let $R :=M_n(\mathcal{O}_\lambda) \times M_n(\mathcal{O}_\lambda)$. Let $M$ denote the $\mathcal{O}_\lambda$-subalgebra of $R$ generated by the image of $G$ under the product map 
    \[ \rho_1 \times \rho_2 : G \to \GL_n(\mathcal{O}_\lambda) \times \GL_n(\mathcal{O}_\lambda). \]
    Recall that $\delta(G)$ is the image of $G$ under $\rho_1 \times \rho_2$ in $M/\lambda M$. 
    
    Let $\alpha$ be the largest nonnegative integer such that for each $g \in G$, one has that 
    \[ \tr(\rho_1(g)) \equiv \tr(\rho_2(g)) \Mod{\lambda^\alpha}. \]
    As $M$ is a $\mathcal{O}_\lambda$-subalgebra generated by the image of $G$ under $\rho_1 \times \rho_2$, it follows that the congruence $\tr x_1 \equiv \tr x_2 \Mod{\lambda^\alpha}$ holds for each pair $(x_1,x_2) \in M$. We obtain the $\mathcal{O}_\lambda$-module homomorphism $\phi: M \to \mathcal{O}_\lambda$ given by
    \[ \phi(x_1,x_2) = \lambda^{-\alpha}(\tr(x_2) - \tr(x_1)). \]
    Since $\phi(\lambda M) \subseteq \lambda\mathcal{O}_\lambda$, we may consider the induced $\mathcal{O}_\lambda/\lambda \mathcal{O}_\lambda$-module homomorphism $\bar{\phi}:M/\lambda M \to \mathcal{O}_\lambda/\lambda \mathcal{O}_\lambda$.

    Let $C = \delta(G) \setminus \ker \bar{\phi}$ be the set of elements in $\delta(G)$ whose image under $\bar{\phi}$ in $M/\lambda M$ all are nonzero. From the definition of $\alpha$ and the linearity of the trace map, there exists $g_0 \in G$ such that 
    \[ \tr(\rho_1(g_0)) \not\equiv \tr(\rho_2(g_0)) \Mod{\lambda^{\alpha+1}}. \] 
    Note that the image of $(\rho_1 \times \rho_2)(g_0)$ in $\delta(G)$ is contained in $C$, so $C$ is nonempty. Also, $C$ is closed under conjugation since the trace map is invariant under conjugation.
    
    
    Finally, suppose that $g \in G$ is such that the image of $g$ in $\delta(G)$ is contained in $C$. Then, $\phi(\rho_1 \times \rho_2(g)) \not\in \lambda\mathcal{O}_\lambda$, and in particular $\tr \rho_1(g) \neq \tr \rho_2(g)$.
\end{proof}
\begin{remark}
    In the notation $\delta(G)$ and $\varphi(G)$ we suppress the dependence on the representations $\rho_1$ and $\rho_2$ as they are usually fixed in the context.
\end{remark}

We now give an analogous version of Proposition \ref{prop:Mayle-Wang} in the case where the mod 2 representations are isomorphic and absolutely irreducible. This allows us to replace $\delta(G)$ in Proposition \ref{prop:Mayle-Wang} with $\varphi(G)$ from Proposition \ref{prop:phi_def}, a set which is easier to estimate the size of. The idea to replace $\delta(G)$ with $\varphi(G)$ comes from Ch\^{e}nevert \cite[pg. 114]{chenevert}, in which he gives a remark that, in the $2$-adic case, Serre \cite{serre-method} implies that $\delta(G) \cong \varphi(G)$. However, in a conversation with Ch\^{e}nevert, Serre mentions he might not have proven the map $\delta(G) \to \varphi(G)$ was an isomorphism, but, in an unpublished letter to Tate, that the $\alpha$ in the proof of Proposition \ref{prop:ignasi_1} is equal to the $\beta$ coming from the construction of the function $\varphi$ if the residual representation is surjective. We show, in the 2-adic case, that $\alpha = \beta$, and that we can replace $\delta(G)$ in Proposition \ref{prop:Mayle-Wang} with $\varphi(G)$ and get the same conclusion, that is, there is a subset $C \subseteq \varphi(G)$ that is a conjugacy class, and if $g \in G$ is such that $\varphi(g) \in C$, then $\tr \rho_1(g) \neq \tr \rho_2(g)$.

In order to prove this special case, we require a theorem of Carayol \cite{carayol}.
\begin{theorem}\cite[Theorem 1]{carayol}\label{thm:carayol}
Let $A$ be a local ring, $R$ an $A$-algebra, and let $\rho_1, \rho_2 : R \to M_n(A)$ be two representations of $R$ of the same dimension $n$. Suppose that the residual representation $\bar{\rho} : R \otimes_A F \to M_n(F)$, where $F$ is the residue field of $A$, is absolutely irreducible. Suppose that the traces for $\rho_1$ and $\rho_2$ are the same for every $r \in R$. Then, $\rho_1$ and $\rho_2$ are isomorphic as representations, that is, there exists a matrix $Q \in \GL_n(A)$ such that $\rho_1(r) = Q\rho_2(r)Q^{-1}$ for all $r \in R$.
\end{theorem}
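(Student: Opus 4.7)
The plan is to combine Burnside's density theorem with the non-degeneracy of the trace pairing on matrix algebras, and then to invoke a version of the Skolem--Noether theorem over the local ring $A$.

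First I would exploit the absolute irreducibility of $\bar\rho$: by Burnside's theorem applied over the residue field $F$, the image $\bar\rho(R \otimes_A F)$ spans all of $M_n(F)$. Since $\rho_1$ and $\rho_2$ have equal traces and $\bar\rho$ is absolutely irreducible, both residual representations coincide with $\bar\rho$ up to isomorphism, so each of $\bar\rho_1(R)$ and $\bar\rho_2(R)$ also $F$-spans $M_n(F)$. Applying Nakayama's lemma (using that $A$ is local and $M_n(A)$ is finitely generated free over $A$), the $A$-submodule of $M_n(A)$ generated by $\rho_i(R)$ is all of $M_n(A)$ for $i=1,2$, so I can pick $r_1,\ldots,r_{n^2} \in R$ for which $\rho_2(r_1),\ldots,\rho_2(r_{n^2})$ form an $A$-basis of $M_n(A)$; checking that the Gram matrix $[\tr(\rho_2(r_i r_j))]$ is invertible over $A$ (by Nakayama starting from its invertibility modulo the maximal ideal), the matrices $\rho_1(r_1),\ldots,\rho_1(r_{n^2})$ also form an $A$-basis of $M_n(A)$.

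Next I would define an $A$-linear map $\Phi : M_n(A) \to M_n(A)$ by $\Phi(\rho_2(r_i)) = \rho_1(r_i)$ extended by linearity. Well-definedness uses the non-degeneracy of the trace pairing $(x,y)\mapsto\tr(xy)$ on $M_n(A)$: any hypothetical relation $\sum_i a_i \rho_2(r_i)=0$ forces $\sum_i a_i \tr(\rho_2(r_j r_i))=0$ for every $j$, which by the trace hypothesis equals $\sum_i a_i \tr(\rho_1(r_j r_i))=0$, and since the $\rho_1(r_j)$ span $M_n(A)$, perfection of the trace pairing gives $\sum_i a_i \rho_1(r_i)=0$. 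A parallel computation applied to the structure constants of multiplication in each basis shows $\Phi$ preserves products on basis elements, making $\Phi$ an $A$-algebra automorphism of $M_n(A)$. By a version of Skolem--Noether valid for $M_n(A)$ with $A$ commutative local, $\Phi$ is inner, so there is $Q \in \GL_n(A)$ with $\Phi(x) = Q x Q^{-1}$ for all $x \in M_n(A)$; evaluating at $x = \rho_2(r)$ and extending $A$-linearly in $r \in R$ gives the desired identity $\rho_1(r) = Q \rho_2(r) Q^{-1}$.

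The principal obstacle I anticipate is ensuring that each step descends cleanly from $F$ to the local ring $A$: the basis property, well-definedness of $\Phi$, and the multiplicativity argument all rely on the invertibility of the trace Gram matrix over $A$, which I would establish by Nakayama starting from non-degeneracy of the trace form on $M_n(F)$. A secondary technical point is pinning down the correct form of Skolem--Noether for $M_n(A)$ over a local commutative ring, which is classical but deserves careful citation.
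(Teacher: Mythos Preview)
The paper does not give its own proof of this theorem: it is quoted from Carayol \cite{carayol} and used as a black box in the proof of Proposition~\ref{prop:mayle_wang_varphi}. So there is no in-paper argument to compare against.

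That said, your outline is essentially Carayol's original proof and is sound. One expositional point: your ``well-definedness'' paragraph is phrased as ruling out relations $\sum_i a_i \rho_2(r_i)=0$, but since the $\rho_2(r_i)$ were chosen to be an $A$-basis there are no such relations; what you actually need (and what your trace computation does prove) is the stronger statement that $\Phi(\rho_2(r))=\rho_1(r)$ for \emph{every} $r\in R$, i.e.\ that writing $\rho_2(r)=\sum_i a_i\rho_2(r_i)$ forces $\rho_1(r)=\sum_i a_i\rho_1(r_i)$. This is exactly what makes the structure-constant/multiplicativity step go through and what lets you conclude $\rho_1(r)=Q\rho_2(r)Q^{-1}$ for all $r$ at the end. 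The Skolem--Noether step over a commutative local ring is standard (an $A$-algebra automorphism of $M_n(A)$ turns the standard module $A^n$ into another faithful projective module of rank $n$, which is free since $A$ is local, and the resulting isomorphism gives the conjugating matrix).
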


We now prove the special case.
\begin{proposition}\label{prop:mayle_wang_varphi}
Let $n$ be a positive integer. Let $G$ be a group and $\rho_1, \rho_2 : G \rightarrow \GL_n(\Z_2)$ be representations, and suppose their reductions $\bar{\rho}_1, \bar{\rho}_2$ are isomorphic and absolutely irreducible. Suppose that there exists an element $g \in G$ such that $\tr \rho_1(g) \not= \tr \rho_2(g)$. Then there exists a subset $C \subseteq \varphi(G)$ for which
\begin{enumerate}
    \item the set $C$ is non-empty and closed under conjugation by $\varphi(G)$, and
    \item if the image in $\varphi(G)$ of an element $g \in G$ belongs to $C$, then $\tr \rho_1(g) \not= \tr \rho_2(g)$.
\end{enumerate}
\end{proposition}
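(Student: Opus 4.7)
The plan is to mirror the proof of Proposition~\ref{prop:Mayle-Wang}, but with the deviation group $\delta(G)$ replaced by the smaller quotient $\varphi(G)$. The key point is to argue that, in this absolutely irreducible $2$-adic setting, the integer $\alpha$ (the trace-deviation exponent) from Section~\ref{sec:tools_mayle_wang} equals the integer $\beta$ (the conjugacy-deviation exponent) from Section~\ref{sec:deviation_group}. This equality lets me package the trace obstruction directly in terms of data visible on $\varphi(G)$, namely the pair $(\theta(g) \bmod \lambda,\, \rho_1(g) \bmod \lambda)$.

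First I would establish $\alpha = \beta$. We already have $\beta \leq \alpha$ from the discussion preceding Proposition~\ref{prop:phi_def}. Suppose for contradiction that $\alpha \geq \beta+1$, so that $\tr \rho_1(g) \equiv \tr \rho_2(g) \pmod{\lambda^{\beta+1}}$ for all $g \in G$. Applying Theorem~\ref{thm:carayol} (Carayol) with the local ring $A = \Z_2/2^{\beta+1}\Z_2$, the $A$-algebra $R = A[G]$, and the mod $\lambda^{\beta+1}$ reductions of $\rho_1, \rho_2$ (whose common residual representation is the absolutely irreducible $\bar\rho_1 \cong \bar\rho_2$), one concludes that $\rho_1$ and $\rho_2$ are conjugate modulo $\lambda^{\beta+1}$, contradicting the maximality of $\beta$.

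With $\alpha = \beta$ in hand, I would define a trace function $f\colon \varphi(G) \to k$ by $f(A, B) = \tr(AB)$, which is well-defined since $\theta(g)\rho_1(g) \bmod \lambda$ depends only on $\varphi(g)$. Set $C = \{(A,B) \in \varphi(G) : f(A,B) \neq 0\}$. A short computation using the semidirect-product multiplication from Proposition~\ref{prop:phi_def}, together with the cyclic property of the trace, shows that $f$ is invariant under $\varphi(G)$-conjugation, so $C$ is conjugation-stable. Nonemptiness of $C$ and property (2) both come from the identity
\[
  \tr \rho_2(g) - \tr \rho_1(g) \;=\; \pi^\beta \tr(\theta(g)\rho_1(g)),
\]
which follows from $\rho_2(g) = (I_n + \pi^\beta \theta(g))\rho_1(g)$: using $\alpha = \beta$, the definition of $\alpha$ supplies some $g$ for which the left-hand side lies outside $\lambda^{\beta+1}$, witnessing $\varphi(g) \in C$; conversely any $g$ with $\varphi(g) \in C$ has $\tr(\theta(g)\rho_1(g)) \not\equiv 0 \pmod{\lambda}$, whence $\tr \rho_1(g) \neq \tr \rho_2(g)$.

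The main obstacle is the step $\alpha = \beta$; the rest is essentially bookkeeping with the definition of $\varphi$. This is precisely where absolute irreducibility of $\bar\rho_1 \cong \bar\rho_2$ and Carayol's theorem are essential, since without them trace-equality modulo $\lambda^{\beta+1}$ need not promote to conjugacy modulo $\lambda^{\beta+1}$, and the smaller group $\varphi(G)$ would lose the finer information that the full deviation group $\delta(G)$ retains.
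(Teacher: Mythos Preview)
Your proposal is correct and follows essentially the same approach as the paper: establish $\alpha=\beta$ via Carayol's theorem, define the trace function $\phi'(A,B)=\tr(AB)$ on the semidirect product, and take $C$ to be its nonvanishing locus in $\varphi(G)$. The only cosmetic differences are that the paper applies Carayol directly over $\Z/2^{\alpha}\Z$ (giving $\alpha\le\beta$) rather than over $\Z/2^{\beta+1}\Z$ by contradiction, and verifies conjugation-invariance of $C$ by pulling back through the homomorphism $\varphi$ to $G$ rather than by a direct semidirect-product computation; both routes are straightforward.
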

\begin{proof}
    Our setup begins, as it did, in Section \ref{sec:deviation_group}. Let $\alpha$ be the largest nonnegative integer such that for each $g \in G$, we have 
    \[ \tr(\rho_1(g)) \equiv \tr(\rho_2(g)) \Mod{2^\alpha} \quad \text{and} \quad \tr(\rho_1(g)) \not\equiv \tr(\rho_2(g)) \Mod{2^{\alpha+1}}. \]
    In addition, we let $\beta$ be the largest integer such that $\rho_1$ and $\rho_2$ are conjugated modulo $2^{\beta}$, that is, there is a matrix $P \in \GL_n(\Z_2)$ such that $\rho_1 \equiv P\rho_2P^{-1} \Mod{2^\beta}$. As demonstrated before, we have $\beta \leq \alpha$. Also, given that $\rho_1$ and $\rho_2$ are conjugate modulo $2^\beta$ but not conjugate modulo $\lambda^{\beta+1}$, if we replace $\rho_2$ with a conjugate $P\rho_2 P^{-1}$ for $P \in \GL_n(\Z_2)$, we may assume
    \begin{equation}\label{eq:equivalence_mod_beta} 
    \rho_1 \equiv P\rho_2P^{-1} \Mod{2^\beta} \text{ and } \rho_1 \not\equiv P\rho_2P^{-1} \Mod{2^{\beta+1}}.
    \end{equation}
    This implies $P\rho_2(g)P^{-1} - \rho_1(g) \equiv 0 \Mod{2^{\beta}}$ for any $g \in G$. In particular, we get $P\rho_2(g)P^{-1} - \rho_1(g) = \theta_g2^\beta$ for some $\theta_g \in M_n(\Z_2)$, which we can write as 
    \begin{equation}\label{eq:theta_g_equation}
    \theta_g = \frac{P\rho_2(g)P^{-1} - \rho_1(g)}{2^{\beta}}.
    \end{equation}
    In particular, note that
    \begin{equation}\label{eq:tr_theta_g}
        \tr(\theta_g) = 2^{-\beta}(\tr(P\rho_2(g)P^{-1}) - \tr(\rho_1(g))) = 2^{-\beta}(\tr(\rho_2(g)) - \tr(\rho_1(g)))
    \end{equation}
    by the invariance of trace under conjugation.
    
    We now show $\alpha = \beta$. Extend the maps $\rho_1,\rho_2$ to the group ring $\Z/2^\alpha\Z[G]$ by $\rho_i(\sum a_jg_j) = \sum a_j\rho_i(g_j)$, for $i = 1,2$ and $a_j \in \Z/2^\alpha\Z$ and $g_j \in G$. Then, notice that
    \begin{equation}
    \begin{split}
        \tr(\rho_1(\sum a_jg_j)) \Mod{2^\alpha}
        & \equiv \tr(\sum a_j \rho_1(g_j)) \Mod{2^{\alpha}} \\
        & \equiv \sum a_j\tr(\rho_1(g_j)) \Mod{2^{\alpha}} \\
        & \equiv \sum a_j \tr(\rho_2(g_j)) \Mod{2^{\alpha}} \\
        & \equiv \tr(\rho_2(\sum a_jg_j)) \Mod{2^\alpha}.
    \end{split}
    \end{equation}
    Since we satisfy the hypotheses of Theorem \ref{thm:carayol} with $A = \Z/2^{\alpha}\Z$ and $R = \Z/2^{\alpha}\Z[G]$, we can find a matrix $Q \in \GL_n(\Z/2^{\alpha}\Z)$ such that $\rho_1(g) \equiv Q\rho_2(g)Q^{-1} \Mod{2^{\alpha}}$ for all $g \in G$. However, $\beta$ is the largest integer such that $\rho_1$ and $\rho_2$ are conjugate modulo $2^{\beta}$, so $\alpha \leq \beta$, implying $\alpha = \beta$.
    
    Recall, from \eqref{eq:deviation_group_varphi_def}, the map $\varphi: G \to M_n(\F_2) \rtimes \GL_n(\F_2)$ is defined by
    \begin{equation}\label{eq:phi_prime_1} 
    \varphi(g) = (\theta(g) \Mod{2}, \rho_1(g) \Mod{2)}) = ([\theta_g\rho_1(g)^{-1}]_2, [\rho_1(g)]_2).
    \end{equation}
    We note our use of the notation $[N]_2$, for $N \in M_n(\Z_2)$, to denote the residue class of $N$ modulo $2$.
    
    Define the map $\phi' : M_n(\F_2) \rtimes \GL_n(\F_2) \to \F_2$ by
    \begin{equation} 
    \phi'((A,B)) = \tr(AB)
    \end{equation}
    where the product of matrices is taken to be the action of $\GL_n(\F_2)$ on $M_n(\F_2)$, and the trace is considered to be modulo $2$.
    By \eqref{eq:phi_prime_1}, notice
    \begin{equation}\label{eq:phi'(varphi)_identity}
    \begin{split} 
        \phi'(\varphi(g)) 
        & = \tr([\theta_g\rho_1(g)^{-1}]_2[\rho_1(g)]_2) \\
        & = \tr([\theta_g\rho_1(g)^{-1}\rho_1(g)]_2) \\
        & = \tr([\theta_g]_2) \\
        & = [\tr(\theta_g)]_2 \\
        & = [2^{-\alpha}(\tr(\rho_2(g)) - \tr(\rho_1(g)))]_2
    \end{split}
    \end{equation}
    where we have used \eqref{eq:tr_theta_g} above (with $\beta$ replaced with $\alpha$, since $\alpha = \beta$)  and the fact that, for a matrix $N \in M_n(\Z_2)$ with $\tr(N) = \sum_{i=1}^{n} a_{ii}$ for entries $a_{ii} \in \Z_2$ along the diagonal, we have
    \begin{equation}\label{eq:trace_property}
    \begin{split}
        \tr([N]_2) 
        & = \sum_{i = 1}^{n} [a_{ii}]_2 \\
        & = \left[ \sum_{i = 1}^{n} a_{ii} \right]_2 \\
        & = \left[\tr (N)\right]_2
    \end{split}
    \end{equation}
    which shows the final equality (noting our use of $[x]_2$ in \eqref{eq:trace_property} to denote the residue class of a $2$-adic integer $x \in \Z_2$).

    Let $C$ be the set of elements in $\varphi(G)$ that take a nonzero value under the map $\phi'$. From the definition of $\alpha$ and the linearity of the trace map, there exists $g_0 \in G$ such that 
    \[ \tr(\rho_1(g_0)) \not\equiv \tr(\rho_2(g_0)) \Mod{2^{\alpha+1}}. \] 
    Note that the image of $g_0$ in $\varphi(G)$ is inside $C$, so $C$ is nonempty. In addition, let $\phi(h) \in \phi(G)$ for some $h \in G$; then, given $\varphi$ is a homomorphism (Proposition \ref{prop:phi_def}) and by \eqref{eq:phi'(varphi)_identity} and the invariance under conjugation of the trace map, 
    \begin{equation}
    \begin{split}
        \phi'(\varphi(h)\varphi(g)\varphi(h)^{-1}) 
        & = \phi'(\varphi(hgh^{-1})) \\
        & = [2^{-\alpha}(\tr(\rho_2(hgh^{-1})) - \tr(\rho_1(hgh^{-1})))]_2 \\
        & = [2^{-\alpha}(\tr(\rho_2(h)\rho_2(g)\rho_2(h)^{-1}) - \tr(\rho_1(h)\rho_1(g)\rho_1(h)^{-1}))]_2 \\
        & = [2^{-\alpha}(\tr(\rho_2(g)) - \tr(\rho_1(g)))]_2 \\
        & = \phi'(\varphi(g)) \\
        & \neq 0,
    \end{split}
    \end{equation} 
    so $C$ is closed under conjugation. Finally, suppose that $g \in G$ is such that the image of $g$ in $\varphi(G)$ is contained in $C$. Then, $\phi'(\varphi(g)) \not= 0$, in particular $\tr \rho_1(g) \neq \tr \rho_2(g)$.
\end{proof}

\section{Improved bounds for the effective isogeny theorem}\label{sec:mayle-wang-improvement}


For a prime $\ell$ and an elliptic curve $E$, we define 
\[ \Q(E[\ell^{\infty}]) = \bigcup_{k=1}^{\infty} \Q(E[\ell^k]). \]


We begin with the proof of Proposition \ref{prop:mayle_wang_generalization}. We note that the work which follows is the same as that of Mayle-Wang \cite{Mayle-Wang}, except for our use of Proposition \ref{cor:bach_sorenson_condense} and Table \ref{tab:bach_sorenson_gen_values}.

Let $E$ and $E'$ be two elliptic curves over $\Q$ and let $A = E \times E'$. Let $G = \Gal(\Q(A[2^{\infty}])/\Q))$. We may regard the $2$-adic representations $\rho_{E,2}$ and $\rho_{E',2}$ as representations of $G$ instead of $G_\Q$ since $\Q(E[2^\infty])$ and $\Q(E'[2^\infty])$ are subfields of $\Q(A[2^{\infty}])/\Q$. 

The representation $\rho_{A,2} = \rho_{E,2} \times \rho_{E',2}$ and is a continuous homomorphism $G \rightarrow \GL_2(\Z_2) \times \GL_2(\Z_2)$ with image being $M$ as defined in the proof of Proposition~\ref{prop:Mayle-Wang}. Since $2 M$ is closed inside $M$, we see that $\delta(G)$ is a closed subgroup of $G$ and hence by the fundamental theorem of infinite Galois theory corresponds to a finite Galois extension $K/\Q$ with $K \subseteq \Q(A[2^\infty])/\Q$.

Since $K \subseteq \Q(A[2^n])$ for some $n \in \mathbb{N}$, we have that 
\begin{equation}
\label{dg-divide} |\delta(G)| = [K:\Q] \mid [\Q(A[2^n]):\Q]  \mid  |\GL_2(\Z/2^n\Z)|^2 = (6 \cdot 16^{n-1})^2,
\end{equation}
for some $n \in \mathbb{N}$ and by \cite[Proposition 12]{Mayle-Wang}, 
\begin{equation}
\label{dg-bound}
  |\delta(G)| \le 255.
\end{equation}

The set $\varphi(G)$ (defined in Proposition \ref{prop:phi_def}) is a subset of a very explicit semi-direct product, and estimating $|\varphi(G)|$ gives a smaller bound.

\begin{proposition}\label{prop:mayle_wang_generalization}
    Assume GRH. Let $E$ and $E'$ be two elliptic curves over $\Q$. Suppose $E$ and $E'$ are not $\Q$-isogenous. Let $\delta(G)$ be the deviation group of $G$ with respect to the $2$-adic representations $\rho_{E,2}$ and $\rho_{E',2}$. 

    Choose the triple $(\bar{a}, \bar{b}, \bar{c})$ from Table \ref{tab:bach_sorenson_gen_values} for $n_0 = \max(72, 2|\delta(G)|)$. Then there exists a prime $p$ of good reduction for $E$ and $E'$ such that $a_p(E) \neq a_p(E')$ such that
    \begin{equation}\label{eq:mayle_wang_generalization_result} 
    p \leq ( \bar{a}((2|\delta(G)|-1)\log\rad(2N_EN_{E'}) + 2|\delta(G)|\log(2|\delta(G)|)) + 2|\delta(G)|\bar{b} + \bar{c})^2.
    \end{equation}

    Furthermore, if $E$ and $E'$ are such that their mod 2 representations are isomorphic and absolutely irreducible, then we may replace $|\delta(G)|$ with $|\varphi(G)|$.
\end{proposition}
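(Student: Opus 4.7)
The plan is to mirror the Mayle--Wang strategy of \cite{Mayle-Wang}, but to feed into it Proposition~\ref{prop:Mayle-Wang} (respectively Proposition~\ref{prop:mayle_wang_varphi} in the absolutely irreducible case) together with the condensed effective Chebotarev statement of Proposition~\ref{cor:bach_sorenson_condense} so that the sharper triples from Table~\ref{tab:bach_sorenson_gen_values} appear in the bound.

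First, since $E$ and $E'$ are not $\Q$-isogenous, Faltings' theorem produces at least one prime of good reduction with $a_p(E)\neq a_p(E')$; in Galois-representation terms this says there exists $g_0\in G$ with $\tr\rho_{E,2}(g_0)\neq \tr\rho_{E',2}(g_0)$. Proposition~\ref{prop:Mayle-Wang} then yields a non-empty, conjugation-closed subset $C\subseteq\delta(G)$ with the trace-separating property, so that any $g\in G$ whose image in $\delta(G)$ lies in $C$ automatically satisfies $a_p(E)\neq a_p(E')$ at $p$ the rational prime below the Frobenius of $g$ (provided $p$ is of good reduction).

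Next, I would identify $\delta(G)$ with the Galois group of a finite extension $K/\Q$ contained in $\Q(A[2^\infty])$, as discussed in the text preceding the proposition. Since $\Q(A[2^\infty])$ is unramified outside $2N_EN_{E'}$, so is $K$; taking $m=2N_EN_{E'}$ and $\tilde{K}=K(\sqrt{m})$ we get $\rad(d_{\tilde K})\mid \rad(2N_EN_{E'})$ and $n_{\tilde K}\le 2|\delta(G)|$. Now apply Corollary~\ref{cor:bach-sorenson} with the subset $C$ pulled back to $\Gal(K/\Q)$ to obtain a prime $p\nmid m$ (hence of good reduction for both curves) whose Frobenius in $\Gal(K/\Q)$ lies in $C$; choosing $n_0=\max(72,2|\delta(G)|)$ and invoking the explicit bound \eqref{specific-bound} of Proposition~\ref{cor:bach_sorenson_condense} (together with the final sentence of that proposition, which lets us use the table entry for the upper bound $n_0$ in place of the actual degree $n_{\tilde K}\le n_0$) gives precisely \eqref{eq:mayle_wang_generalization_result}.

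For the final assertion, when $\bar\rho_{E,2}\cong\bar\rho_{E',2}$ are absolutely irreducible, replace Proposition~\ref{prop:Mayle-Wang} by Proposition~\ref{prop:mayle_wang_varphi}: this delivers a non-empty conjugation-closed $C\subseteq\varphi(G)$ with the trace-separating property. Since $\varphi(G)$ is a quotient of $\delta(G)$, it corresponds to a Galois subextension $K'\subseteq\Q(A[2^\infty])$ with $[K':\Q]=|\varphi(G)|$, still unramified outside $2N_EN_{E'}$, and the rest of the argument proceeds identically with $|\delta(G)|$ replaced by $|\varphi(G)|$ throughout. The main bookkeeping obstacle is ensuring the degree/ramification hypotheses of Proposition~\ref{cor:bach_sorenson_condense} line up with the quotient field cut out by $\delta(G)$ or $\varphi(G)$, and that the chosen row of Table~\ref{tab:bach_sorenson_gen_values} at $n_0$ governs all admissible $n_{\tilde K}\le n_0$; this is exactly what is guaranteed by the maxima-over-table clause in the statement of Proposition~\ref{cor:bach_sorenson_condense}.
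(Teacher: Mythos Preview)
Your proposal is correct and follows essentially the same route as the paper: invoke Faltings to get a trace-distinguishing element, apply Proposition~\ref{prop:Mayle-Wang} (resp.\ Proposition~\ref{prop:mayle_wang_varphi}) to obtain the conjugation-closed set $C$ in $\delta(G)$ (resp.\ $\varphi(G)$), pass to the corresponding fixed field $K\subseteq\Q(A[2^\infty])$, and feed this into Proposition~\ref{cor:bach_sorenson_condense} with $m$ the radical of the conductors to control both ramification and degree. The only cosmetic discrepancies are your choice $m=2N_EN_{E'}$ versus the paper's $m=\rad(N_EN_{E'})$ (either works) and the stray reference to Corollary~\ref{cor:bach-sorenson} where you clearly mean Proposition~\ref{cor:bach_sorenson_condense}.
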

\begin{proof}
Let $\mathcal{O}_\lambda = \Z_2$. Let $A = E \times E'$ and apply Proposition \ref{prop:Mayle-Wang} with $\ell = 2$, $n = 2$, $G = \Gal(\Q(A[2^{\infty}])/\Q))$, and the $2$-adic representations $\rho_1 = \rho_{E,2}$ and $\rho_2 = \rho_{E',2}$. By Faltings theorem \cite{faltings}, since the two elliptic curves $E$ and $E'$ are not $\Q$-isogenous, $\rho_1$ and $\rho_2$ are not isomorphic; therefore, by Serre \cite[pg. IV-15]{serre_ell-adic_representation}, there is some prime $p$ such that $a_p(E) \neq a_p(E')$. By Proposition \ref{prop:Mayle-Wang}, there exists a conjugacy class $C \subseteq \delta(G)$ obeying the stated conclusion of this proposition. 

 Let $K$ be the subfield of $\Q(A[2^\infty])$ for which $\Gal(K/\Q) = \delta(G)$. Choosing $m = \rad(N_{E} N_{E'})$, Proposition \ref{cor:bach_sorenson_condense} produces a prime $p$ not dividing $m$ such that $\left(\frac{K/\Q}{p}\right) \subseteq C$ and 
\begin{align}\label{eq:mayle_wang_improvement_1}
 p & \leq (\bar{a} \cdot ((n_0-1) \log \rad(d_{\tilde{K}}) + n_0 \log n_0) + \bar b \cdot n_0 + \bar c)^2, 
\end{align}
where $n_0 = \max(72, 2 |\delta(G)|)$ and $\bar{a},\bar{b},\bar{c}$ are the maximum of their values over entries in Table \ref{tab:bach_sorenson_gen_values} with $n_{\tilde{K}} \le 2 |\delta(G)|$, respectively. It follows from Proposition~\ref{prop:Mayle-Wang} that
\[ \tr \rho_{E,2}(\text{Frob}_p) \neq \tr \rho_{E',2}(\text{Frob}_p), \]
and consequently $a_p(E) \neq a_p(E')$.

The abelian variety $A$ has good reduction at some prime $q$ if and only if both $E$ and $E'$ have good reduction at $q$. Thus, $K/\Q$ is unramified outside of the prime divisors of $m = N_E N_{E'}$. As $\tilde{K}$ is the compositum of $K$ and $\Q(\sqrt{m})$, the primes that ramify in $\tilde{K}$ are precisely those that ramify in $K$ or in $\Q(\sqrt{m})$. Since $\rad(d_{\Q(\sqrt{m})}) \mid \rad(2m) = \rad(2N_E N_{E'})$, and rad$(d_K) \mid $ rad$(2N_E N_{E'})$, we have that 
\begin{equation}\label{eq:mayle_wang_improvement_2} 
    \rad(d_{\tilde{K}}) = \text{rad}(d_Kd_{\Q(\sqrt{m})}) \mid \text{rad}(2N_E N_{E'}).
\end{equation}
Now, applying Lemma \ref{lem:Mayle-Wang_discriminant_bound} to \eqref{eq:mayle_wang_improvement_1} gives us
\begin{align*} 
    p 
    & \leq (\bar{a}((2|\delta(G)|-1)\log\rad(2N_E N_{E'}) + 2|\delta(G)|\log(2|\delta(G)|)) + 2|\delta(G)|\bar{b} + \bar{c})^2
\end{align*}
which matches \eqref{eq:mayle_wang_generalization_result}.

To prove the final statement, note if $\bar{\rho}_{E,2}$ and $\bar{\rho}_{E',2}$ are isomorphic and absolutely irreducible, then we instead apply Proposition \ref{prop:mayle_wang_varphi} over Proposition \ref{prop:Mayle-Wang}, in which case $\delta(G)$ is replaced with $\varphi(G)$; in particular, $|\delta(G)|$ can be replaced with $|\varphi(G)|$ in \eqref{eq:mayle_wang_generalization_result}.
\end{proof}

Now we give a proof of Theorem \ref{thm:mayle_wang_improvement}.

\begin{proof}[Proof of Theorem \ref{thm:mayle_wang_improvement}]
We split our analysis into two cases. If the mod $2$ representations are not isomorphic, then mod $2$ already distinguishes the traces. Define
\begin{equation}\label{eq:rho_ell_def}
    \bar{\rho}_2 : G_\Q \to \GL_2(\Z/2\Z) \times \GL_2(\Z/2\Z)
\end{equation}
by $\bar{\rho}_2(x) = (\bar{\rho}_{E,2}(x), \bar{\rho}_{E',2}(x))$. Let $G_2 = \bar{\rho}_2(G_\Q) \subset \GL_2(\Z/2\Z) \times \GL_2(\Z/2\Z)$ be the image of the map $\bar{\rho}_2$. Let 
\begin{equation}\label{eq:C_ell_definition}
    C_2 = \{ (s,s') \in G_2 \mid \tr(s) \neq \tr(s') \}.
\end{equation}    
    Apply Proposition~\ref{cor:bach_sorenson_condense} to the field $L = \Q(E[2],E'[2])$, whose Galois group is $G_2$, the conjugacy class $C_2$ given in \eqref{eq:C_ell_definition}, and $m = \rad(N_{E} N_{E'})$, so that we get a prime $p$ unramified in $L$ and $p \nmid m$ such that $a_p(E) \not\equiv a_p(E') \Mod{2}$ (implying $a_p(E) \neq a_p(E')$) and satisfying
    \begin{align}
    p & \leq \max((\bar{a} \log |d_{\tilde{L}}| + \bar b n_{\tilde{L}} + \bar{c})^2, p_0(n_{\tilde{L}})) \\
    \label{before-dK} & \le \left( 1.755 \left( (2 n_L-1)\log\rad(d_{\tilde{L}}) + 2 n_L\log(2 n_L) \right) + 0.23 \cdot 2 n_L + 6.8 \right).
    \end{align}
    Taking $[L:\Q] \leq |\GL_2(\F_2)|^2 = 6^2 = 36$ gives us
    \begin{equation}\label{eq:mayle_wang_improvement_max_4}
    \begin{split}
        p
        & \leq \left( 1.745 \left( (71)\log\rad(d_{\tilde{L}}) + 72 \log( 72) \right) + 72 \cdot 0.23 + 6.8 \right)^2 \\
        & \leq \left( 124 \log\rad(2N_E N_{E'}) + 561 \right)^2 \\
    \end{split}
    \end{equation}

 Next, assume that the mod 2 representations $\rho_1 = \bar{\rho}_{E,2}$ and $\rho_2 = \bar{\rho}_{E',2}$ are isomorphic and absolutely irreducible. Apply Proposition \ref{prop:mayle_wang_generalization}, and replace $\delta(G)$ with $\varphi(G)$ (since the mod 2 representations are isomorphic and absolutely irreducible) to get a prime $p$ such that $a_p(E) \neq a_p(E')$ and satisfying
    \[     
    p \leq (\bar{a}((2|\varphi(G)|-1)\log\rad(2N_E N_{E'}) + 2|\varphi(G)|\log(2|\varphi(G)|)) + 2|\varphi(G)|\bar{b} + \bar{c})^2.
    \]
    From \eqref{eq:rho_1_formula} and Lemma \ref{lem:determinant}, we have for any $g \in G$,
    \begin{equation}
    \begin{split}
        \det(\rho_2(g)) 
        & = \det((I_2+2^\beta\theta(g))\rho_1(g)) \\
        & = (1+2^{\beta}\tr(\theta(g)) + O(2^{2\beta}))\det(\rho_1(g)).
    \end{split}
    \end{equation}
    As we have $\det \rho_1 = \det \rho_2$ being the cyclotomic character, so the above can be rewritten as $0 = 2^{\beta}\tr(\theta(g))+O(2^{2\beta})$, which, after multiplying through by $2^{-\beta}$ implies
    \[ \tr(\theta) \equiv 0 \Mod{2}. \]
    In particular, the map $\varphi$ from Proposition \ref{prop:phi_def} takes values in $M_2^0(\F_2) \rtimes \GL_2(\F_2)$, where $M_2^0(\F_2)$ denotes the matrices with trace $0$ with entries in $\F_2$.
    Therefore, we have $|\varphi(G)| \leq |M_2^0(\F_2) \rtimes \GL_2(\F_2)| = 8 \cdot 6 = 48$. We find from Proposition~\ref{cor:bach_sorenson_condense} that
    \begin{equation}\label{eq:mayle_wang_improvement_max_1}
    \begin{split}
        p 
        & \leq (1.745(95\log\rad(2N_E N_{E'}) + 96\log(96)) +96 \cdot 0.23 + 6.8)^2 \\
        & \leq (166 \log\rad(2N_E N_{E'}) + 794)^2 \\
    \end{split}
    \end{equation}

We now improve the bound in \eqref{eq:mayle_wang_improvement_max_1} by using the following two results.
\begin{proposition}
  Let $G$ be a group and $\rho_1, \rho_2 : G \rightarrow \GL_2(\Z_2)$ be a group homomorphism, and suppose the mod 2 representations $\bar{\rho}_1, \bar{\rho}_2$ are isomorphic. Let $\Xi$ be the elements $g \in G$ such that the characteristic polynomials of $\rho_1(g)$ and $\rho_2(g)$ coincide. Then for $g \in \Xi$, the order of $\delta(g)$ in $\delta(G)$ is $\le 3$.
\end{proposition}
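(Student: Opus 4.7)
The plan is to apply Cayley-Hamilton componentwise to translate the equality of characteristic polynomials into a single quadratic relation in $M$, then reduce modulo $2M$ and exploit the fact that there are essentially only two possibilities for the residual characteristic polynomial.

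First I would set $p(X) = X^2 - aX + b$ to be the common characteristic polynomial of $\rho_1(g)$ and $\rho_2(g)$, so that $a = \tr \rho_1(g) = \tr \rho_2(g)$ and $b = \det \rho_1(g) = \det \rho_2(g)$. Since $\rho_i(g) \in \GL_2(\Z_2)$, the determinant $b$ lies in $\Z_2^\times$ and so $\bar b = 1$ in $\F_2$. Cayley-Hamilton applied to each coordinate yields $\rho_i(g)^2 = a\,\rho_i(g) - b\, I_2$ for $i = 1, 2$, and assembling these componentwise gives the identity
\[ \rho(g)^2 = a\, \rho(g) - b\, I \]
inside $M$, where $I = (I_2, I_2)$ is the identity of $M$. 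Reducing this relation modulo $2M$, and using $\bar b = 1$ together with $-1 = 1$ in $\F_2$, I obtain
\[ \delta(g)^2 = \bar a\, \delta(g) + \bar I \]
in $(M/2M)^\times$, where $\bar I$ is the identity in $M/2M$.

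A case analysis on $\bar a \in \{0, 1\}$ then finishes the argument. If $\bar a = 0$, then $\delta(g)^2 = \bar I$, so $\delta(g)$ has order dividing $2$. If $\bar a = 1$, then $\delta(g)^2 = \delta(g) + \bar I$, and multiplying this on the left by $\delta(g)$ gives
\[ \delta(g)^3 = \delta(g)^2 + \delta(g) = (\delta(g) + \bar I) + \delta(g) = \bar I, \]
since $2\, \delta(g) = 0$ in $M/2M$, so $\delta(g)$ has order dividing $3$. Either way, the order of $\delta(g)$ in $\delta(G)$ is at most $3$.

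There is essentially no obstacle; the argument reduces to the observation that the only monic degree-$2$ polynomials over $\F_2$ with unit constant term are $X^2 + 1 = (X+1)^2$ and $X^2 + X + 1$, both of which force $\delta(g)$ to satisfy $\delta(g)^k = \bar I$ for some $k \in \{1, 2, 3\}$. The only mild subtlety is to carry the Cayley-Hamilton relation from $M_2(\Z_2) \times M_2(\Z_2)$ down to the subalgebra $M$ and then to $M/2M$, which works cleanly because $\rho(g)$ and $I$ already lie in $M$ and the coefficients $a, b$ are scalars in $\Z_2$.
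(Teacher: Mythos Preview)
Your proof is correct. The paper does not give its own argument for this proposition but simply cites Ch\^{e}nevert's thesis \cite[Proposition 5.5.6]{chenevert}, so there is no in-paper proof to compare against directly. Your Cayley--Hamilton approach is clean and self-contained: the key point, which you identify clearly, is that the relation $\rho(g)^2 = a\,\rho(g) - b\,I$ already holds in the subalgebra $M$ because $\rho(g)$ and $I$ lie in $M$ and $a,b \in \Z_2$, so it descends to $M/2M$. It is also worth noting that your argument never invokes the hypothesis that $\bar\rho_1 \cong \bar\rho_2$; the conclusion follows for any $g$ with $\tr\rho_1(g)=\tr\rho_2(g)$ and $\det\rho_1(g)=\det\rho_2(g)$, so the isomorphism hypothesis appears to be contextual rather than essential to the statement.
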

\begin{proof}
    See \cite[Proposition 5.5.6]{chenevert}.
\end{proof}

\begin{corollary}
\label{chenevert-quotient}
    Let $G$ be a group and $\rho_1, \rho_2 : G \rightarrow \GL_2(\Z_2)$ be a group homomorphism, and suppose the mod 2 representations $\bar{\rho}_1, \bar{\rho}_2$ are isomorphic. Let $\pi : \delta(G) \surjects \bar G$ be a quotient having a conjugacy class $C \subseteq \bar G$ of order $> 3$. If $g \in G$ is such that $\pi(\delta(g)) \in C$, then $g \not\in \Xi$.
\end{corollary}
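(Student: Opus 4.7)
The plan is to derive Corollary \ref{chenevert-quotient} as an essentially immediate consequence of the preceding proposition together with the elementary fact that group homomorphisms do not increase the order of elements. I would first rephrase the desired statement in contrapositive form: assuming $g \in \Xi$, I need to show $\pi(\delta(g)) \not\in C$.

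The first step is to apply the proposition immediately above to conclude that the order of $\delta(g)$ in $\delta(G)$ is at most $3$. Since $\pi : \delta(G) \surjects \bar{G}$ is a surjective group homomorphism, the order of $\pi(\delta(g))$ in $\bar{G}$ must divide the order of $\delta(g)$, so $\pi(\delta(g))$ has order at most $3$ as well.

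The second step is to combine this with the elementary observation that all elements of any conjugacy class of a group share a common order. Reading the hypothesis that $C$ is a conjugacy class ``of order $> 3$'' as saying that the common order of its elements exceeds $3$, no element of order at most $3$ can lie in $C$. Hence $\pi(\delta(g)) \not\in C$, which is the desired contradiction.

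I do not anticipate any real obstacle; the content of the corollary is essentially bookkeeping once the previous proposition is in hand. The only point worth flagging is the interpretation of the phrase ``order $> 3$'' applied to a conjugacy class: the argument works cleanly only when this refers to the common order of the elements of $C$, since the bound supplied by the preceding proposition concerns the order of an element $\delta(g)$ and gives no direct leverage on the cardinality of any conjugacy class in $\bar{G}$.
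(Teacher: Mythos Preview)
Your argument is correct and is precisely the intended one: the paper states the corollary without proof, treating it as immediate from the preceding proposition, and your contrapositive together with the fact that homomorphisms do not increase element order is exactly the content. Your reading of ``order $>3$'' as the common order of the elements of $C$ is the right one, as confirmed by the way the corollary is invoked a few lines later (``if $\delta(G)$ has a quotient $\bar G$ with an element of order $>3$'').
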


From \eqref{dg-divide} and \eqref{dg-bound}, the possible sizes of $\delta(G)$ are 
\begin{equation}
\label{order-list}
  1, 2, 3, 4, 6, 8, 9, 12, 16, 18, 24, 32, 36, 48, 64, 72, 96, 128, 144, 192.
\end{equation}
As the possible orders are small, it is possible to enumerate in {\tt Magma} all isomorphism classes of groups of these sizes. The group $\delta(G)$ therefore lies in an explicit finite list which can be computed.

By Corollary~\ref{chenevert-quotient}, if $\delta(G)$ has a quotient $\bar G$ with an element of order $>3$, then we may replace $\delta(G)$ by $\bar G$. We check this in {\tt Magma} for each value of $|\delta(G)|$ and find that either $\delta(G)$ has such a quotient with strictly smaller size in the given list of \eqref{order-list} or it is in a small list of problematic groups. We list for each size $|\delta(G)|$, the {\tt Magma} labels of the isomorphism classes of the problematic groups of that order.

\begin{table}[h]
\begin{tabular}{c||c|c}
$|\delta(G)|$ & \# of isomorphism classes & problematic groups \\
\hline 
192 & 1543 & 1023, 1025, 1541\\ 
144 & 197 & none \\
128 & 2328 & 2326, 2327, 2328 \\
96 & 231 & 204 \\
72 & 50 & none \\
64 & 267 & 266, 267 \\
48 & 52 & 3, 50\\
36 & 14 & 11 \\
32 & 51 & 49, 50, 51
\end{tabular}
\caption{List of problematic groups}
\label{problem-table}
\end{table}

Consider the homomorphism $\varphi: \delta(G) \rightarrow M_2^0(\F_2) \rtimes \GL_2(\F_2)$. The possible orders of the image are:
\begin{equation}
    1, 2, 3, 4, 6, 8, 12, 16, 24, 48.
\end{equation}
Using {\tt Magma}, we check if $|\delta(G)| \ge 32$, there is no homomorphism from a problematic group to a subgroup of order $24$ or $48$ in the codomain of $\varphi$. Hence, either $|\delta(G)| \le 24$ or the image of $\varphi$ is $\le 16$. In either case, we can replace $\delta(G)$ by a quotient of order $\le 24$. Hence, \eqref{eq:mayle_wang_improvement_max_1} is improved to the same bound in \eqref{eq:mayle_wang_improvement_max_4}.

\end{proof}

\section{The results of Rouse and Zureick-Brown on $2$-adic images}

The $2$-adic representation $\rho_{E,2} : G_\Q \rightarrow \GL_2(\Z_2)$ of an elliptic curve $E$ over $\Q$ has open image in $\GL_2(\Z_2)$ and the properties that $\det \rho_{E,2} : G_ \Q \rightarrow \Z_2^\times$ is surjective and $\rho_{E,2}(c)$ is an element with determinant $-1$ and trace $0$ for a complex conjugation. With this in mind, the authors in \cite{Rouse-Zureick-Brown} make the following definition:
\begin{definition}
\label{arithmetic-max}
    An open subgroup $H \subseteq \GL_2(\Z_2)$ is arithmetically maximal if
    \begin{enumerate}
        \item $\det: H \rightarrow \Z_2^\times$ is surjective,
        \item there is an element of $H$ with determinant $-1$ and trace $0$, and
        \item the is no subgroup $K$ satisfying (1) and (2) with $H \subsetneq K$ and so that the genus of $X_K$ is $\ge 2$.
    \end{enumerate}
\end{definition}
The idea behind this definition is that arithmetically maximal subgroups $H \subseteq \GL_2(\Z_2)$ are maximal among the subgroups $H$ satisfying (1) and (2), except possibly when $H$ is contained in a subgroup $K$ such that $X_K$ has genus $\le 1$. For instance, if $H \subsetneq K$ and $X_K$ has genus $\ge 2$, it would be easier and sufficient to determine the $\Q$-rationals point $X_K$ rather than $X_H$.

For an arithmetically maximal subgroup $H$, either $X_H$ has infinitely many $\Q$-rational points (hence has genus $\le 1$) or $X_H$ has finitely many $\Q$-rational points. The union of the latter cases leads to a finite list of $j$-invariants. 

In \cite{Rouse-Zureick-Brown}, it is shown there are $727$ arithmetically maximal subgroups $H \subseteq \GL_2(\Z_2)$ up to conjugation such that $-I \in H$. There are an additional $1006$ arithmetically maximal subgroups up to conjugation such that $-I \notin H$. Thus, there are a total of $1733$ arithmetically maximal subgroups $H \subseteq \GL_2(\Z_2)$. Among these, there are $1414$ which have genus $\le 1$ and $1208$ of these are such that $X_H$ has infinitely many $\Q$-rational points.

In \cite[Theorem 1.1]{Rouse-Zureick-Brown}, the possible images of $\rho_{E,2}$ are determined in the following sense:
\begin{theorem}
\label{rouse-reformulate}
Let $H \subseteq \GL_2(\Z_2)$ be a subgroup and let $E/\Q$ be an elliptic curve such that the image of $\rho_{E,2}$ is contained in a conjugate of $H$. Then one of the following holds:
\begin{enumerate}
    \item The modular curve $X_H$ has infinitely many $\Q$-rational points (hence has genus $\le 1$).
    \item The elliptic curve $E$ has complex multiplication.
    \item The $j$-invariant of $E$ is one of
    \begin{align}
        \label{finite-j}
        & 2^{11}, 2^4 \cdot 17^3, 4097^3/2^4, 257^3/2^8, -857985^3/62^8, 919425^3/496^4, \\
        \notag & -3 \cdot 18249920^3/17^{16}, 7 \cdot 1723187806080^3/79^{16}.
    \end{align}
    \end{enumerate}
\end{theorem}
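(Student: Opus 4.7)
The plan is to deduce this reformulation directly from the original \cite[Theorem 1.1]{Rouse-Zureick-Brown}, which classifies $\rho_{E,2}(G_\Q)$ up to conjugation in $\GL_2(\Z_2)$, combined with the functorial behavior of modular curves under inclusion of level subgroups. First I would reduce to the case $G := \rho_{E,2}(G_\Q) \subseteq H$ (not merely to a conjugate) by replacing $H$ with the appropriate conjugate inside of which $G$ lies; since $X_H$ and $X_{H^\sigma}$ are canonically isomorphic over $\Q$, the conclusion is insensitive to this replacement.

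If $E$ has complex multiplication, case (2) holds, so assume $E$ is non-CM. Apply the classification: either $j(E)$ is one of the eight exceptional values listed in \eqref{finite-j} (placing us in case (3)), or the image $G$ is conjugate to one of the $1208$ arithmetically maximal subgroups $H^*$ for which $X_{H^*}$ has infinitely many $\Q$-rational points. Suppose we are in this last case. Since $G$ is conjugate to $H^*$ and $G \subseteq H$, after a further conjugation of $H$ we may assume $H^* \subseteq H$.

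The inclusion $H^* \hookrightarrow H$ then induces a forgetful morphism of modular curves $\pi : X_{H^*} \to X_H$, which is defined over $\Q$ (both subgroups have surjective determinant since $\det \rho_{E,2}$ is the cyclotomic character) and is finite of degree $[H : H^*]$. Since $\pi$ sends $\Q$-rational points to $\Q$-rational points and each fiber is finite, the infinitely many $\Q$-rational points of $X_{H^*}$ produce infinitely many $\Q$-rational points on $X_H$, placing us in case (1). The main obstacle I anticipate is the handling of modular curves when $-I \notin H$: in that setting $X_H$ is only a coarse moduli space and $\Q$-rational points correspond to equivalence classes of pairs $(E,\phi)$ modulo quadratic twist. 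However, the present argument only requires that $\pi$ be defined over $\Q$ and transfer $\Q$-rational points, which holds regardless.
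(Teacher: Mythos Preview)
Your derivation is correct. The paper does not supply a proof of this theorem at all: it is introduced with the sentence ``In \cite[Theorem 1.1]{Rouse-Zureick-Brown}, the possible images of $\rho_{E,2}$ are determined in the following sense'' and then simply stated as a quotation of Rouse and Zureick-Brown's result. What you have written is precisely the short argument needed to pass from the literal statement of their Theorem~1.1 (a classification of the image $\rho_{E,2}(G_\Q)$ itself) to the paper's reformulation in terms of an arbitrary containing subgroup $H$: namely, once the image equals some $H^*$ from the list of $1208$ groups with $X_{H^*}(\Q)$ infinite, the inclusion $H^* \subseteq H$ (after conjugation) yields a finite $\Q$-morphism $X_{H^*} \to X_H$ that forces $X_H(\Q)$ to be infinite as well. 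Your handling of the conjugation bookkeeping and the surjectivity of $\det$ on $H$ (inherited from the cyclotomic character via $G \subseteq H$) is accurate, and your remark on the $-I \notin H$ case is a reasonable caution but, as you note, does not affect the argument.
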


We rephrase the above theorem for the purposes of proving the main results of this paper.

\begin{theorem}
\label{images-2-adic}
Let $E/\Q$ be an elliptic curve without complex multiplication. Then $\rho_{E,2}(G)$ is one of the $1208$ arithmetically maximal groups listed in \cite{Rouse-Zureick-Brown} which have infinitely many $\Q$-rational points, or the $j$-invariant of $E$ appears in \eqref{finite-j}.
\end{theorem}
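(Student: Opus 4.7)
The plan is to derive this as a corollary of Theorem \ref{rouse-reformulate} (that is, \cite[Theorem 1.1]{Rouse-Zureick-Brown}) combined with Faltings' theorem. The key step is to show that, outside the exceptional $j$-invariants listed in \eqref{finite-j}, the image $H := \rho_{E,2}(G_\Q)$ is itself arithmetically maximal, and hence appears in the enumeration of $1208$ subgroups. First I would check conditions (1) and (2) of Definition \ref{arithmetic-max} for $H$: condition (1) holds because $\det \circ \rho_{E,2}$ is the $2$-adic cyclotomic character, which surjects onto $\Z_2^\times$; condition (2) holds because the image of complex conjugation is an involution with determinant $-1$, forcing its eigenvalues to be $\pm 1$ and its trace to be $0$.

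Assuming $j(E)$ does not lie in \eqref{finite-j}, I would verify condition (3) by contradiction. Suppose $K \supsetneq H$ satisfies (1) and (2) with $X_K$ of genus $\geq 2$. Applying Theorem \ref{rouse-reformulate} to $K$ (which contains the image of $\rho_{E,2}$) forces one of three alternatives: $X_K$ has infinitely many $\Q$-rational points, $E$ has CM, or $j(E)$ is exceptional. The first is incompatible with $X_K$ having genus $\geq 2$ by Faltings' theorem; the second is excluded by hypothesis; the third is excluded by assumption. No such $K$ exists, so $H$ is arithmetically maximal.

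Applying Theorem \ref{rouse-reformulate} to $H$ itself then gives, once CM and exceptional $j(E)$ are ruled out, that $X_H$ has infinitely many $\Q$-rational points, placing $H$ among the $1208$ groups enumerated in \cite{Rouse-Zureick-Brown}. The remaining case, where $j(E)$ is one of the values in \eqref{finite-j}, is precisely the second alternative of the theorem to be proved. The only non-bookkeeping step, and therefore the main obstacle, is the arithmetic-maximality argument, where Faltings' theorem is essential for ruling out any proper enlargement $K$ of $H$ whose modular curve has genus $\geq 2$.
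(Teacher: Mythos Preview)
Your argument is correct and is exactly the natural way to unpack the statement; the paper itself gives no proof, merely introducing Theorem \ref{images-2-adic} with the sentence ``We rephrase the above theorem for the purposes of proving the main results of this paper.'' Your derivation makes explicit what that rephrasing entails (the only minor point to add is that $H$ is open since $E$ is non-CM, so that Definition \ref{arithmetic-max} applies).
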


\section{Improved bounds for Serre's open image theorem}
In Theorem~\ref{thm:mayle_wang_improvement}, there is a hypothesis that $\rhobar_{E,2} \simeq \rhobar_{E',2}$ is absolutely irreducible. While we do not have an argument to remove this condition and achieve better bounds than Theorem~\ref{thm:mayle_wang_result}, we are able to do so in the case when $E$ and $E'$ are quadratic twists of each other and do not have complex multiplication.

\begin{proof}[Proof of Theorem~\ref{thm:mayle_wang_twist}]

Suppose $E'$ is a twist of $E$ by a quadratic character $\chi$ associated to the extension $\Q(\sqrt{d})$. Then $\rho_{E',2} = \rho_{E,2} \otimes \chi$. Since $E$ does not have complex multiplication, $E$ and $E'$ are not $\Q$-isogenous. 

Assume $\rho_{E,2}$ and hence $\rho_{E',2}$ are not absolutely irreducible.

If $\Q(\sqrt{d})$ is not a subfield of $L = \Q(E[2^\infty])$, then $L(\sqrt{d})$ is a degree $2$ extension of $L$. Hence, the identity automorphism of $L$ extends to an automorphism $\sigma$ of $L(\sqrt{d})$ such that $\chi(\sigma) = -1$. It follows that $\rho_{E,2}(\sigma) = I$ and $\rho_{E',2}(\sigma) = -I$ where $I$ is the identity element. This means that $\alpha = \beta = 1$ so we may apply the proofs of Theorem~\ref{thm:mayle_wang_improvement} and Proposition~\ref{prop:mayle_wang_varphi} (no need for Theorem~\ref{thm:carayol}) to get the desired conclusion.

Otherwise $\Q(\sqrt{d})$ lies in the field $L$. In \cite{Rouse-Zureick-Brown} (see Theorem~\ref{images-2-adic}), the possible $2$-adic images $\rho_{E,2}(G)$ of an elliptic curve $E/\Q$ are determined up to conjugacy. Every such image contains the principal congruence subgroup of level $32$ and can be regarded as a subgroup of $\GL_2(\Z/2^t\Z)$ for $0 \le t \le 5$. In order to apply \cite[Lemma 3.3]{Rouse-Zureick-Brown}, without loss of generality we take $t \ge 2$.

Let $\Gamma = 1 + 2^{t+1} M_2(\Z_2) \subseteq N = 1 + 2^t M_2(\Z_2)$. The character $\chi$ corresponds to a subgroup $H$ of index $2$ inside $\rho_{E,2}(G)$. Either $\Gamma \subseteq N \subseteq H$ or 
\begin{equation}
  N/N \cap H \cong NH/H \cong G/H
\end{equation}
so $N/N \cap H$ has order $2$. In the latter case, $N \cap H$ is a maximal subgroup of $N$, hence by \cite[Lemma 3.3]{Rouse-Zureick-Brown}, we obtain again that $\Gamma \subseteq N  \cap H \subseteq H$.

It follows that $\chi$ factors through
\begin{equation}
  \rho_{E,2}(G)/\Gamma.
\end{equation}

Consider the product representation
\begin{equation}
  \rho_2 = \rho_{E,2} \times \rho_{E',2} : G \rightarrow \GL_2(\Z_2) \times \GL_2(\Z_2).
\end{equation}
We note that 
\begin{equation}
    \rho_{E',2}(g) = \begin{cases}
        \rho_{E,2}(g) & \text{ if } \rho_{E,2}(g) \in H \\
        -\rho_{E,2}(g) & \text{ if } \rho_{E,2}(g) \notin H.
    \end{cases}
\end{equation}
Let $\theta$ be defined as:
\begin{align}
  \theta : \rho_{E,2}(G) & \rightarrow \rho_{E',2}(G) \\
\notag \rho_{E,2}(g) & \mapsto \begin{cases}
            \rho_{E,2}(g) & \text{ if } \rho_{E,2}(g) \in H \\
            -\rho_{E,2}(g) & \text{ if } \rho_{E,2}(g) \notin H.
        \end{cases}
\end{align}
The map $\theta$ is an isomorphism which is the identity when restricted to $\Gamma \subseteq H$ and 
\begin{equation}
  \rho_2(G) = (\rho_{E,2} \times \rho_{E',2})(G)
\end{equation}  
is the subgroup of $\GL_2(\Z_2) \times \GL_2(\Z_2)$ given by the graph of $\theta$, namely,
\begin{equation}
   \left\{ (\rho_{E,2}(g), \theta(\rho_{E,2}(g))) : g \in G\right\}.
\end{equation}
Hence, it follows that the inclusion
\begin{align}
    \iota: \rho_{E,2}(G) & \rightarrow \rho(G) \\
    \rho_{E,2}(g) & \mapsto (\rho_{E,2}(g), \theta(\rho_{E,2}(g)))
\end{align}
is an isomorphism. Composing with the homomorphism $\rho_2(G) \rightarrow (M/2M)^\times$, we obtain a homomorphism
\begin{equation}
    \phi: \rho_{E,2}(G) \rightarrow \rho_2(G) \surjects  \delta(G) \subseteq (M/2M)^\times.
\end{equation}

Since $\rho_{E,2}(G) \supseteq 1 + 2^t M_2(\Z_2)$, we see that $M = \Z_2[\rho_2(G)] \supseteq 2^t \Delta(M_2(\Z_2))$ where $\Delta: M_2(\Z_2) \rightarrow M_2(\Z_2) \times M_2(\Z_2)$ is the diagonal homomorphism. Thus, $2M \supseteq 2^{t+1} \Delta(M_2(\Z_2))$. Thus, we see that 
the kernel of the map $\phi$ contains $\Gamma = 1 + 2^{t+1} M_2(\Z_2)$ and hence $\phi$ factors through
\begin{equation}
  \rho_{E,2}(G)/\Gamma.
\end{equation}
Using {\tt Magma}, we check that each one of the $1208$ possibilities for $\rho_{E,2}(G)$ does not have a quotient which factors through $\rho_{E,2}(G)/\Gamma$ and is isomorphic to one of the problematic groups of order $192$, $128$, or $96$ in Table~\ref{problem-table}, if the image of $\rhobar_{E,2}$ is not absolutely irreducible. We conclude that it is possible to replace $\delta(G)$ with a quotient of order $\le 64$. It is also checked that the elliptic curves $E$ over $\Q$ with $j$-invariant in the finite list of Theorem~\ref{rouse-reformulate} (3) have $C_E \le 2$.

Hence, we obtain the bound
\begin{equation}
\label{eq:mayle_wang_improvement_max_3}
    \begin{split}
        p 
        & \leq (1.755(127\log\rad(2N_E N_{E'}) + 128\log(128)) + 128 \cdot 0.23 + 6.8)^2 \\
        & \leq (223 \log\rad(2N_E N_{E'}) + 1127)^2.
    \end{split}
\end{equation}

\end{proof}

\begin{proof}[Proof of Theorem \ref{thm:mayle_wang_surjective_result}]
We combine Theorems~\ref{thm:mayle_wang_improvement} and \ref{thm:mayle_wang_twist} with the arguments in \cite{Mayle-Wang}.
\end{proof}

\bibliographystyle{acm}
\bibliography{references}

\end{document}